\documentclass[11pt,a4paper]{article}
\usepackage[T1]{fontenc}
\usepackage{authblk}
\usepackage[colorlinks,linktocpage,linkcolor=blue]{hyperref}
\usepackage{amsfonts}
\usepackage{amssymb}
\usepackage{graphicx}
\usepackage{amsfonts}
\usepackage{fancyhdr}
\usepackage{float}
\usepackage{subfigure}
\usepackage{epstopdf}
\usepackage{algorithm}
\usepackage{amsthm, amsmath, enumitem}
\usepackage{color}
\usepackage{multirow}
\usepackage{algorithm}
\usepackage{algpseudocode}
\usepackage{amsmath}
\usepackage{bm}
\usepackage[section]{placeins}

\newtheorem{theorem}{Theorem}[section]
\newtheorem{lemma}{Lemma}[section]

\newtheorem{remark}{Remark}[section]

\newtheorem{example}{Example}

\def\bq{\begin{equation}}
\def\eq{\end{equation}}
\def\br{\begin{eqnarray}}
\def\er{\end{eqnarray}}
\def\brr{\bq\begin{array}{rlll}}
\def\err{\end{array}\eq}

\def\text#1{\hbox{#1}}

\newcommand{\bsub}{\begin{subequations}}
\newcommand{\esub}{\end{subequations}$\!$}

\newcommand{\n}{\partial\vec n}
\graphicspath{{Figure/}} 

\usepackage{bm}
\usepackage{mathrsfs}%
\usepackage{cite}

\numberwithin{equation}{section}
\begin{document}
\title{Inverse source problem for the parabolic equation with sparse moving observations}
\author[1]{Qiling Gu\thanks{guql@sustech.edu.cn}}
\author[1]{Wenlong Zhang\thanks{zhangwl@sustech.edu.cn}}
\author[2,3]{Zhidong Zhang\thanks{zhangzhidong@mail.sysu.edu.cn}}
\affil[1]{\normalsize{Department of Mathematics, Southern University of Science and Technology (SUSTech), Shenzhen, Guangdong, China.}}
\affil[2]{\normalsize{School of Mathematics (Zhuhai), Sun Yat-sen University, Zhuhai 519082, Guangdong, China}}
\affil[3]{\normalsize{Guangdong Province Key Laboratory of Computational Science, Sun Yat-sen
University, Guangzhou 510000, Guangdong, China}}
\maketitle

\begin{abstract}
This paper considers the inverse problem of identifying the source term of parabolic equations from sparse boundary measurements. We used data from moving sensors to locate the unknown source term. This work first proves the uniqueness of the inverse problem under such measurements. Then the movement strategy of the sensor is given, from which the authors build the reconstruction algorithm. Finally, some numerical experiments are performed and the corresponding results are generated, which indicate the effectiveness of the algorithms.    
\end{abstract}

\date{}

\textbf{Keywords:} inverse source problem, sparse measurements, uniqueness, moving observation point, movement strategy. \\

\textbf{Mathematics subject classification}: 35R30, 65J20, 65M60, 65N21, 65N30
\hskip\parindent

\section{Introduction.}\label{secInt}
\subsection{Mathematical statement.}

The model considered in this work is given as follows:
\begin{equation}\label{PDE}
\begin{cases}
\begin{aligned}
(\partial_t-\Delta)u(x,t)&=\chi_{{}_{x\in D}}, &&(x,t)\in\Omega\times(0,T],\\
u(x,t)&=0, &&(x,t)\in\partial\Omega\times(0,T]\cup\Omega\times\{0\},
\end{aligned}
\end{cases}
\end{equation}
where the domain $\Omega$ is set as the unit disc in $\mathbb R^2$. For equation \eqref{PDE}, the source term is the characteristic function $\chi_{{}_{D}}$ with unknown support $D$. The inverse problem is to recover the support $D$ with the measurements of the boundary flux data, which could be given as 
\begin{equation}\label{data}
\frac{\partial u}{\n}(z(t),t),\ z(t)\in \partial\Omega,\ t\in(0,T].
\end{equation}
For data \eqref{data}, $\vec n$ is the outward normal vector of the boundary $\partial \Omega$; the notation $z(t)$ indicates that the observation point is moving. Such data \eqref{data} could be obtained from moving sensors. In summary, the inverse problem investigated in this article could be described as 
\begin{equation}\label{inverse problem}
\text{to recover the unknown support}\ D\ \text{with data \eqref{data}.}
\end{equation}

\subsection{Background and literature.}

Inverse source problems for parabolic equations have significant applications in medical imaging~\cite{RundellZhang:2020}, environmental monitoring~\cite{LinOuZhangZhang:2024}, geophysical exploration~\cite{HelinLassasYlinenZhang:2020} and so on. To save observational and computational costs, solving inverse problems with sparse measurements has drawn more and more attention. For instance, \cite{ChengLiu:2020} considers the recovery of a source term in a parabolic equation from local measurements taken at two distinct time instants. In \cite{HettlichRundell:2001}, iterative regularization schemes are developed for identifying a discontinuous source from flux data measured over time at two separate boundary points. The inverse source problem for an advection-dispersion-reaction system is studied in \cite{ElBadiaHamdi2007}, where a time-dependent point source is recovered from concentration and flux measurements at only two boundary points. In a related work, the authors investigates the identification of a point source in a linear advection-dispersion-reaction equation using concentration data collected at two strategically located observation points, from which both its location and time-dependent intensity are reconstructed \cite{ElBadiaHaDuongHamdi2005}. For fractional subdiffusion equations, \cite{Hrizi2024SinglePoint} proposes a second-order non-iterative algorithm based on the topological derivative to reconstruct a singular time-dependent source from a single-point measurement. In \cite{Li2023Inverse}, the fractional derivative order is identified by solving a nonlinear equation derived from a single space-time point measurement, exploiting the properties of the Mittag-Leffler function. Sparse-data reconstruction has also been investigated in related wave propagation problems, including inverse source reconstruction with multi-frequency sparse data and inverse scattering with phaseless or incomplete measurements; see, e.g., \cite{AlzaaligHuLiuSun2020Sparse,JiLiu2021SparseScattered,LiuMeng2023SparseMeasurements,LiLiu2023SparseObs,ZhangZhang2017PhaselessMF,ZhangZhang2018FastImaging,ZhangZhang2020ApproxFactorization}.
 In \cite{RundellZhang:2020}, the authors prove that for the heat equation on the unit disc in $\mathbb R^2$, a variable separable source $F(x,t)=p(x)q(t)$ with unknown $p$ and $q$ can be uniquely determined from flux measurements at two chosen boundary points. This work is extended to the cases of fractional diffusion equations \cite{LiZhang:2020}, and the parabolic equation on a general domain \cite{LinZhangZhang:2022}.

Restricting the observation region to a small size could  obviously decrease the cost for solving inverse problems. However, it would cause a natural question: how to determine the location of observation region, or where to put the sensors. This question is crucial since \cite{LinOuZhangZhang:2024} has the evidence that the location of observation region substantially affects the quality of numerical results. To address the influence of sensors' locations on numerical results, in \cite{LinOuZhangZhang:2024} the authors decide to use the moving observation points. More precisely, the sensors' locations would be functions of time. Under this setting, people could move the sensor to a 'better' place if they could determine the current location is 'bad'. Moreover, this setting is common to some extent in the practical applications. For example, in atmospheric science, the sensors could be set in a car or boat and work while the vehicle is moving.

\subsection{Contributions and Outline.}

This work could be regarded as the promotion of \cite{LinOuZhangZhang:2024}. In \cite{LinOuZhangZhang:2024} the authors also use the boundary flux data generated from two moving sensors to recover the discontinuous source, with one assumption that the path $z_j(t)$ of each sensor is a piecewise constant function. This assumption is obviously infeasible in practical applications. For this article, we summarize the contributions and novelty as follows. 

\begin{enumerate}
    \item \textbf{The piecewise constant assumption on the path of sensors is not required anymore.} In this work, we prove uniqueness theorem of inverse problem \eqref{inverse problem} under a continuous path of observation point. This setting is much more reasonable in practical applications.  
    \item \textbf{We used only 1 observation point.} We could ensure the uniqueness theorem with data from one moving sensor; while in \cite{LinOuZhangZhang:2024}, the uniqueness theorem is valid under two boundary observation points. 
    Theoretically, the costs of one or two observation points may differ slightly, since they are both subsets with zero measures of the boundary $\partial \Omega$. However, in engineering, the cost of each sensor could not be neglected. Hence, compared with \cite{LinOuZhangZhang:2024}, this work could save the cost of inverse problem further. 
    \item \textbf{The movement strategy of sensor is given.}
    With the idea of moving sensors, people could move observation points to better places if the current locations are bad. However, how to determine the location is good or not, and how to reasonable move the sensor, are not answered in \cite{LinOuZhangZhang:2024}. This work gives an explicit movement strategy of sensor, stated in \eqref{movement}. Then the corresponding algorithms are built. The performance of numerical results could confirm the effectiveness and rationality of the movement strategy.    
\end{enumerate}

The remainder of this paper is organized as follows. Section \ref{sec_uniqueness} concerns with the theoretical part of this article, proving the uniqueness theorem of inverse problem \eqref{inverse problem}. Section \ref{sec_numerical} is devoted to the numerical reconstruction and its experimental validation, including the discretization of the forward problem, the framework of Bayesian inverse problem and the movement strategy of sensor. In Section \ref{sec_experiment}, several numerical experiments are taken and the corresponding results are presented.

\section{Uniqueness of inverse problem \eqref{inverse problem}.}\label{sec_uniqueness}

\subsection{The Laplacian eigensystem \texorpdfstring{$\{\lambda_n,\varphi_n(x)\}$}{eigenvalues and eigenfunctions}.}

The eigensystem $\{(\lambda_n,\varphi_n)\}_{n=1}^\infty$
(multiplicity counted) of the operator
$-\Delta$ on the unit disc $\Omega$ with Dirichlet boundary condition is defined as follows:
$$
0<\lambda_1\le \cdots \le \lambda_n\le \cdots
\to\infty,\quad \mbox{as $n\to \infty$,}
$$
and $\varphi_n$ denotes the corresponding eigenfunction
\begin{equation}\label{eigenfunction}
\varphi_n(r,\theta)=\omega_nJ_{|m(n)|}(\lambda_n^{1/2}r)e^{im(n)\theta },
\ n\in\mathbb N^+,
\end{equation}
which form an orthonormal basis of $L^2(\Omega)$.
Here $(r,\theta)$ are the polar coordinates on $\Omega$, and
$J_{|m(n)|}(\cdot)$ is the Bessel function of order $|m(n)|$
with $\lambda_n^{1/2}$ as its zero point. The Bessel orders $m$
depend on the choice of $n$ and we use the notation $m(n)$ to show
the dependence (sometimes we may use $J_m$ for short).
\begin{remark}
$\{\omega_n\}$ are the normalized coefficients to make sure
$\|\varphi_n\|_{L^2(\Omega)}=1$.
From Bourget's hypothesis, proved in \cite{Siegel:2014},
there exist no common positive zeros between two Bessel functions
with different nonnegative integer orders. Also recall that
$J_{-m}(r)=(-1)^mJ_m(r)$, given an
eigenvalue $\lambda_{n_0}$, $\lambda_{n_0}^{1/2}$ can only be the zero
of $J_{\pm m(n_0)}(\cdot)$. Hence the multiplicity for $\lambda_{n_0}$
is two if $m(n_0)$ is nonzero, otherwise, it will be one.

In the case of $m(n_0)\ne 0$, by setting
$\lambda_{n_0}=\lambda_{n_0+1}$, the corresponding eigenpairs are given as
$$(\lambda_{n_0}, \omega_{n_0}J_{|m(n_0)|}(\lambda_{n_0}^{1/2}r)e^{i|m(n_0)|\theta }),
\quad (\lambda_{n_0+1}, \omega_{n_0+1}J_{|m(n_0)|}(\lambda_{n_0+1}^{1/2}r)e^{-i|m(n_0)|\theta }).$$
Now setting $m(n_0)=|m(n_0)|=-m(n_0+1)$, the representation \eqref{eigenfunction} is consistency and $m$ is uniquely determined by the value of $n$. See \cite{GrebenkovNguyen:2013} for details about the structure of $\{\varphi_n\}_{n=1}^\infty$.
\end{remark}

\subsection{Forward operator.}
From \cite{LinZhangZhang:2022}, we could give the following results for the forward operator $F$, which maps the unknown support $D$ to the boundary data. 

\begin{lemma}\label{lemma_forward_operator}
We set $d_n=\int_D \overline{\varphi_n(x)}\ dx$ as the Fourier coefficients of $\chi_{{}_{D}}$. 
Then we have 
\begin{equation}\label{forward}
\begin{aligned}
F(\{d_n\}_{n=1}^\infty)&=\frac{\partial u}{\n}(z(t),t)\\
&=-\sum_{n=1}^\infty C_{z(t),n}d_n (1-e^{-\lambda_n t}), 
\end{aligned}
\end{equation}
where 
$$z(t)=(\cos {\theta_{z(t)}},\sin{ \theta_{z(t)}})\in\partial\Omega,\quad C_{z(t),n}=\pi^{-1/2}\lambda_n^{-1/2}e^{-im(n)\theta_{z(t)}}.$$
\end{lemma}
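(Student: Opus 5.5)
The plan is to expand the solution of \eqref{PDE} in the Dirichlet eigensystem $\{(\lambda_n,\varphi_n)\}$, solve the resulting ODEs in time, and then take the normal derivative term by term on $\partial\Omega$. Since $\{\varphi_n\}$ is an orthonormal basis of $L^2(\Omega)$, the source is
$\chi_{{}_D}=\sum_n \langle\chi_{{}_D},\varphi_n\rangle\varphi_n=\sum_n d_n\varphi_n$, with $d_n=\int_D\overline{\varphi_n}\,dx$. Writing $u=\sum_n u_n(t)\varphi_n(x)$, each coefficient satisfies $u_n'+\lambda_n u_n=d_n$ with $u_n(0)=0$. Hence $u_n(t)=\lambda_n^{-1}d_n(1-e^{-\lambda_n t})$ and
\begin{equation*}
u(x,t)=\sum_{n=1}^\infty \frac{d_n}{\lambda_n}\,(1-e^{-\lambda_n t})\,\varphi_n(x).
\end{equation*}

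Next I compute the normal derivative of each eigenfunction on the unit circle. There $\partial_{\vec n}=\partial_r$ at $r=1$, so by \eqref{eigenfunction}
\begin{equation*}
\partial_r\varphi_n(1,\theta)=\omega_n\lambda_n^{1/2}J_{|m(n)|}'(\lambda_n^{1/2})\,e^{im(n)\theta}.
\end{equation*}
To remove $\omega_n$, I use the classical Lommel integral at a zero $k$ of $J_m$, namely $\int_0^1 J_m(kr)^2r\,dr=\tfrac12 J_m'(k)^2$. The normalization $\|\varphi_n\|_{L^2(\Omega)}=1$ then reads $2\pi\omega_n^2\cdot\tfrac12J_{|m|}'(\lambda_n^{1/2})^2=1$, so $|\omega_nJ_{|m|}'(\lambda_n^{1/2})|=\pi^{-1/2}$. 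Fixing the sign of the real constant $\omega_n$ so that $\omega_nJ'_{|m|}(\lambda_n^{1/2})=-\pi^{-1/2}$ gives
\begin{equation*}
\partial_{\vec n}\varphi_n(z)=-\pi^{-1/2}\lambda_n^{1/2}e^{\pm im(n)\theta_z}.
\end{equation*}
Multiplying by $\lambda_n^{-1}d_n(1-e^{-\lambda_n t})$ and evaluating at $z=z(t)$ yields the claimed coefficient $C_{z(t),n}=\pi^{-1/2}\lambda_n^{-1/2}e^{\mp im(n)\theta_{z(t)}}$.

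The exponent convention must be matched to \eqref{forward}. Because $\chi_{{}_D}$ and $u$ are real, the series equals its own complex conjugate. Moreover, the map $n\mapsto n\pm1$ within each degenerate pair (which sends $m\mapsto -m$ and leaves $\lambda_n$ unchanged) interchanges $\varphi_n$ with $\overline{\varphi_n}$ up to the factor $(-1)^m$, which is absorbed into $\omega_n$. So writing the expansion with $e^{-im(n)\theta}$, as in the statement, is a relabeling consistent with the Remark on the structure of $\{\varphi_n\}$. I would check this bookkeeping carefully rather than grind through it.

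I expect the main obstacle to be justifying the term-by-term normal differentiation. The terms behave like $d_n\lambda_n^{-1/2}$ with $\lambda_n\sim n$, so the boundary series is not obviously absolutely convergent. I would first prove convergence of the flux series in $L^2(0,T)$, uniformly in the boundary point. For this I would use that $u(\cdot,t)\in H^{3/2-\epsilon}(\Omega)$ for a piecewise smooth $D$, together with trace estimates, following the argument of \cite{LinZhangZhang:2022}, and then identify the limit with $\partial_{\vec n}u$. If uniform convergence in the boundary point is needed, I would instead split $\sum_n d_n\lambda_n^{-1/2}(1-e^{-\lambda_n t})$ as $\sum_n d_n\lambda_n^{-1/2}$ minus the $e^{-\lambda_n t}$ part. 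The second part converges absolutely for $t>0$, and the first part is the time-independent Dirichlet-to-Neumann flux of $(-\Delta)^{-1}\chi_{{}_D}$, which is a well-defined $H^{1/2}(\partial\Omega)$ function.
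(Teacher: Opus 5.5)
The paper gives no proof of this lemma; it is quoted from \cite{LinZhangZhang:2022}. Your eigenfunction expansion with the Lommel normalization $\pi\,\omega_n^2J_{|m(n)|}'(\lambda_n^{1/2})^2=1$ is the standard derivation behind that citation. Everything up to
\[
\frac{\partial u}{\n}(z,t)=-\sum_{n=1}^\infty \pi^{-1/2}\lambda_n^{-1/2}e^{im(n)\theta_z}\,d_n\,(1-e^{-\lambda_n t})
\]
is correct. Your own computation fixes the sign $e^{+im(n)\theta_z}$, so the $\pm$ you write is not a free choice.

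The step that fails is the reconciliation with the stated $C_{z(t),n}\propto e^{-im(n)\theta_{z(t)}}$. Swap the two members of a degenerate pair, $n\leftrightarrow n'$ with $m(n')=-m(n)$, $\lambda_{n'}=\lambda_n$, and (with your real $\omega_{n'}=\omega_n$) $\varphi_{n'}=\overline{\varphi_n}$. This turns $e^{im(n)\theta_z}$ into $e^{-im(n)\theta_z}$, but it also turns $d_n$ into $d_{n'}=\overline{d_n}$. Conjugating the real series does exactly the same. So relabeling only produces $-\sum_n\pi^{-1/2}\lambda_n^{-1/2}e^{-im(n)\theta_z}\,\overline{d_n}\,(1-e^{-\lambda_nt})$, never the same sum with $d_n$.

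These really differ. A pair contributes $2\,\mathrm{Re}(e^{im\theta_z}d_n)$ to your correct sum and $2\,\mathrm{Re}(e^{-im\theta_z}d_n)$ to the stated one, up to the common factor. The discrepancy is $-4\sin(m\theta_z)\,\mathrm{Im}\,d_n$, which vanishes for all $n$ only when $D$ is symmetric about the $x_1$-axis. Equivalently, with $d_n=\int_D\overline{\varphi_n}\,dx$, the right-hand side of \eqref{forward} is the flux at the mirror point $(\cos\theta_{z(t)},-\sin\theta_{z(t)})$, not at $z(t)$.

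So the formula as literally stated has a conjugation slip. It holds with $\overline{d_n}=\int_D\varphi_n\,dx$ in place of $d_n$, or with $e^{+im(n)\theta_{z(t)}}$ in $C_{z(t),n}$. You should finish by stating the formula you actually derived and flagging the convention, not by asserting that a relabeling closes the gap. The slip is harmless for Theorem~\ref{T1}, since $\theta\mapsto-\theta$ preserves $(\theta_1-\theta_2)/\pi\notin\mathbb Q$.

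Your convergence worry is easier than you suggest. For $0<s<1/2$ we have $\chi_{{}_D}\in H^s(\Omega)=\mathcal D((-\Delta)^{s/2})$, so $\sum_n\lambda_n^{s}|d_n|^2<\infty$. Since $\lambda_n\sim 4n$, Cauchy--Schwarz gives $\sum_n\lambda_n^{-1/2}|d_n|<\infty$. Hence the flux series converges absolutely and uniformly in $(z,t)$, and identifying its sum with $\frac{\partial u}{\n}$ is then routine.
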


\subsection{Uniqueness theorem.}
In our prior work \cite{RundellZhang:2018,RundellZhang:2020}, we comprehensively analyzed the uniqueness of this inverse problem, which requires the data from two chosen observation points. In this work, due to the setting of moving sensor, we could decrease the cost from two points to one moving observation point with specific path, which is stated as follows.

\begin{theorem}\label{T1}
Given two unknown supports $D_1$ and $D_2$ which have sufficiently smooth boundaries, we denote the solutions corresponding to $D_j$ by $u_j$, $j=1,2$. For data \eqref{data}, we assume that there exists $0\le t_1<\tilde t_1<t_2<\tilde t_2\le T$ such that
\begin{equation*}
z(t) = (\cos {\theta_1},\sin{\theta_1})\ \text{on}\ (t_1, \tilde t_1),\ z(t) = (\cos {\theta_2},\sin{\theta_2})\ \text{on}\ (t_2, \tilde t_2),\ \text{and}\ (\theta_1 - \theta_2)/\pi \notin \mathbb{Q},
\end{equation*}
where $\mathbb{Q}$ is the set of rational numbers. If
\[ \frac{\partial u_1}{\partial \mathbf{n}}(z(t),t) = \frac{\partial u_2}{\partial \mathbf{n}}(z(t),t), \ t \in (0,T), \]
then $D_1 = D_2$ in the sense of $L^2(\Omega)$.
\end{theorem}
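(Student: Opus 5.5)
The plan is to reduce the theorem to a Dirichlet-series uniqueness argument at each of the two fixed observation angles, and then to combine the two angles using the irrationality assumption. Set $w=u_1-u_2$ and $d_n=d_n^{(1)}-d_n^{(2)}$, where $d_n^{(j)}=\int_{D_j}\overline{\varphi_n}\,dx$. By Lemma \ref{lemma_forward_operator} and the hypothesis, on $(t_1,\tilde t_1)$ we get
\begin{equation*}
\sum_{n=1}^\infty \lambda_n^{-1/2}e^{-im(n)\theta_1}d_n(1-e^{-\lambda_n t})=0 ,
\end{equation*}
and the analogous identity holds with $\theta_2$ on $(t_2,\tilde t_2)$. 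The constant part of this series is awkward to handle. I would therefore differentiate in $t$. Equivalently, $v=\partial_t w$ solves the homogeneous heat equation with zero Dirichlet data and initial value $\chi_{D_1}-\chi_{D_2}$. This gives
\begin{equation*}
g_\theta(t):=\sum_{n=1}^\infty \lambda_n^{1/2}e^{-im(n)\theta}d_n e^{-\lambda_n t}=0,\qquad t\in(t_1,\tilde t_1)\ \text{for}\ \theta=\theta_1 .
\end{equation*}

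The first step is to show that $g_\theta$ is real-analytic (indeed holomorphic) for $\operatorname{Re} t>0$. The coefficients satisfy $|d_n|\le \|\chi_{D_1}-\chi_{D_2}\|_{L^2}$, and the Weyl law gives $\lambda_n\sim cn$ in two dimensions. Hence the series converges uniformly on $\operatorname{Re} t\ge\delta$ for every $\delta>0$. Vanishing on an open interval then forces $g_{\theta_1}\equiv0$ on $(0,\infty)$, and likewise $g_{\theta_2}\equiv0$; this holds even if $t_1=0$, since the interval is open. This analyticity step is the only technical point, and I expect it to be the main (though mild) obstacle. If needed, I would instead appeal to parabolic regularity: the normal derivative of $v$ at a fixed boundary point is analytic in $t>0$ because $v(\cdot,t)$ lies in $D((-\Delta)^s)$ for all $s$.

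The second step groups terms by distinct eigenvalues $\mu_1<\mu_2<\cdots$. By the standard uniqueness for exponential (Dirichlet) series, one multiplies by $e^{\mu_1 t}$, lets $t\to\infty$, and inducts. This shows that for every distinct eigenvalue the sum of the coefficients over its eigenspace vanishes, at both $\theta_1$ and $\theta_2$. The Remark after \eqref{eigenfunction} (Bourget's hypothesis) says each eigenspace is either simple, with $m=0$, or two-dimensional, spanned by indices $n_0,n_0+1$ with $m(n_0)=m=-m(n_0+1)\neq 0$.
\begin{itemize}
\item In the simple case, $\lambda_{n}^{1/2}d_{n}=0$ gives $d_n=0$ directly.
\item In the double case we obtain the linear system
\begin{equation*}
e^{-im\theta_j}d_{n_0}+e^{im\theta_j}d_{n_0+1}=0,\qquad j=1,2 .
\end{equation*}
Its determinant is $e^{-im\theta_1}e^{im\theta_2}-e^{im\theta_1}e^{-im\theta_2}=2i\sin\bigl(m(\theta_2-\theta_1)\bigr)$. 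This is nonzero for every integer $m\ne0$, precisely because $(\theta_1-\theta_2)/\pi\notin\mathbb Q$. Hence $d_{n_0}=d_{n_0+1}=0$.
\end{itemize}

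Thus all Fourier coefficients of $\chi_{D_1}-\chi_{D_2}$ vanish. Since $\{\varphi_n\}$ is an orthonormal basis of $L^2(\Omega)$, it follows that $\chi_{D_1}=\chi_{D_2}$ in $L^2(\Omega)$, i.e.\ $D_1=D_2$ in the stated sense.
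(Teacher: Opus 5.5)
Your proof is correct and takes essentially the paper's route: analytic continuation in $t$ of the flux at each of the two fixed angles (the paper cites Lemma 3.5 of Rundell--Zhang for this), then the two-point uniqueness argument based on Bourget's hypothesis and the irrationality of $(\theta_1-\theta_2)/\pi$ (the paper cites Theorem 1 of the same work). You spell out those cited steps explicitly, and passing to $v=\partial_t w$ is a convenient way to remove the constant part of the series before applying Dirichlet-series uniqueness and the $2\times 2$ determinant $2i\sin\bigl(m(\theta_2-\theta_1)\bigr)\neq 0$.
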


\begin{proof}
With \cite[Lemma 3.5]{RundellZhang:2020}, we could uniquely extend the data $\frac{\partial u_1}{\n}$ from $(t_1, \tilde t_1)$ and $(t_2, \tilde t_2)$ to $(0,\infty)$, respectively.  
Then with the data 
$$\frac{\partial u}{\n}\Big|_{(\cos {\theta_1},\sin{\theta_1})\times(0,\infty)},\quad \frac{\partial u}{\n}\Big|_{(\cos {\theta_2},\sin{\theta_2})\times(0,\infty)}$$ and the condition $(\theta_1 - \theta_2)/\pi \notin \mathbb{Q}$, we could deduce the uniqueness result following \cite[Theorem $1$]{RundellZhang:2020}. The proof is complete. 
\end{proof}

\section{Numerical reconstructions.}\label{sec_numerical}

Although Theorem~\ref{T1} ensures uniqueness for boundary flux data collected along a moving sensor path, the inverse problem remains severely ill-posed in practice. Hence, reconstruction quality depends strongly on the informativeness of the measurements \cite{LinZhangZhang:2022, RundellZhang:2020}. Since a static sensor configuration may be ineffective when only a few sensors are available, we adopt a moving sensing strategy that steers a mobile sensor along the boundary toward more informative regions.

\subsection{Discretization for the forward problem.}

We consider the heat equation on the unit disk
\[
\Omega := \{(x,y)\in\mathbb{R}^2 \mid x^2+y^2<1\},
\]
where the source term is given by the indicator function $\chi_{{}_{x\in D}}$ of an unknown subregion $D\subset\Omega$, which is the target of the reconstruction. The governing equation reads
\begin{equation}\label{eq:main_pde}
\frac{\partial u}{\partial t} - \Delta u = \chi_{{}_{x\in D}}, \qquad x\in\Omega,\ t\in(0,T],
\end{equation}
subject to the homogeneous Dirichlet boundary condition and the zero initial condition
\begin{align*}
u(x,t) &= 0, \qquad x\in\partial\Omega,\ t\in[0,T],\\
u(x,0) &= 0, \qquad x\in\Omega.
\end{align*}
The inverse problem is to reconstruct the unknown domain $D$ from measurements of the boundary heat flux $\partial u/\partial \nu$ collected at selected locations on $\partial\Omega$ over time.

To exploit the circular geometry of $\Omega$, we rewrite the problem in polar coordinates $(r,\theta)$, where $r=\sqrt{x^2+y^2}$ and $\tan\theta=y/x$. The spatial domain then becomes
\[
\tilde{\Omega}:=\{(r,\theta)\mid 0<r<1,\ 0\le \theta\le 2\pi\},
\]
and \eqref{eq:main_pde} takes the form
\begin{equation}\label{eq:polar_pde}
\frac{\partial u}{\partial t}
-\left(
\frac{1}{r}\frac{\partial}{\partial r}\left(r\frac{\partial u}{\partial r}\right)
+\frac{1}{r^2}\frac{\partial^2 u}{\partial \theta^2}
\right)
= \chi_{{}_{x\in D}}(r,\theta),
\qquad (r,\theta)\in\tilde{\Omega},\ t\in(0,T],
\end{equation}
with boundary condition $u(1,\theta,t)=0$ for $\theta\in[0,2\pi]$ and initial condition $u(r,\theta,0)=0$ for $(r,\theta)\in\tilde{\Omega}$. For the time discretization, we apply the backward Euler method on a uniform partition of $[0,T]$ with time step $\Delta t=T/N_T$, and denote by $u^n(r,\theta)\approx u(r,\theta,t_n)$ the approximation at $t_n=n\Delta t$. Then, for $n=1,2,\dots,N_T$, the time-discrete problem is
\begin{equation}\label{eq:time_discrete}
\frac{u^n-u^{n-1}}{\Delta t}
-\left(
\frac{1}{r}\frac{\partial}{\partial r}\left(r\frac{\partial u^n}{\partial r}\right)
+\frac{1}{r^2}\frac{\partial^2 u^n}{\partial \theta^2}
\right)
= \chi_{{}_{x\in D}}(r,\theta).
\end{equation}
Multiplying by a test function $v\in H_0^1(\tilde{\Omega})$ and integrating over $\tilde{\Omega}$ with respect to the measure $d\Omega=r\,dr\,d\theta$, we obtain the weak formulation
\begin{multline}\label{eq:weak_form}
\int_0^{2\pi}\int_0^1 u^n v\, r\,dr\,d\theta
+\Delta t \int_0^{2\pi}\int_0^1
\left(
r\frac{\partial u^n}{\partial r}\frac{\partial v}{\partial r}
+\frac{1}{r}\frac{\partial u^n}{\partial \theta}\frac{\partial v}{\partial \theta}
\right)\,dr\,d\theta \\
=
\int_0^{2\pi}\int_0^1 u^{n-1} v\, r\,dr\,d\theta
+\Delta t \int_0^{2\pi}\int_0^1 \chi_{{}_{x\in D}}\, v\, r\,dr\,d\theta.
\end{multline}

For the spatial discretization, we employ the finite element method on a mesh $\mathcal{T}_h$ consisting of rectangular elements, and choose a finite element space $V_h\subset H_0^1(\tilde{\Omega})$ of continuous piecewise quadratic polynomials. The discrete solution is sought in the form
\[
u_h^n(r,\theta)=\sum_{j=1}^{N_{dof}} U_j^n \phi_j(r,\theta),
\]
where $\{\phi_j\}$ is a basis of $V_h$. Substituting this expansion into \eqref{eq:weak_form} yields the linear system
\begin{equation}\label{eq:linear_system}
(M+\Delta t K)U^n = M U^{n-1} + \Delta t F,
\end{equation}
where
\begin{align*}
M_{ij} &= \int_0^{2\pi}\int_0^1 \phi_j\phi_i\, r\,dr\,d\theta,\\
K_{ij} &= \int_0^{2\pi}\int_0^1
\left(
r\frac{\partial \phi_j}{\partial r}\frac{\partial \phi_i}{\partial r}
+\frac{1}{r}\frac{\partial \phi_j}{\partial \theta}\frac{\partial \phi_i}{\partial \theta}
\right)\,dr\,d\theta,\\
F_i &= \int_0^{2\pi}\int_0^1 \chi_{{}_{x\in D}}\,\phi_i\, r\,dr\,d\theta.
\end{align*}

\subsection{Bayesian framework.}\label{subsec:inference}

The inference step in Algorithm~\ref{alg:adaptive_placement} is carried out within a Bayesian framework using an adaptive pCN-MCMC sampler. This approach provides not only point estimates of the unknown source parameters $\boldsymbol{\xi}$ but also a quantitative characterization of their uncertainty as data are collected sequentially. Let $\mathbf d\in\mathbb{R}^{n_d}$ denote the vector of flux measurements, and let $\boldsymbol{\xi}$ be the unknown parameter vector with prior distribution $p_0(\boldsymbol{\xi})$. By Bayes' theorem, the posterior distribution is given by
\begin{equation}
p(\boldsymbol{\xi}\mid \mathbf d) \propto p_0(\boldsymbol{\xi})\,p(\mathbf d\mid \boldsymbol{\xi}),
\end{equation}
where, under the assumption of additive Gaussian measurement noise with variance $\sigma^2$, the likelihood takes the form
\begin{equation}
p(\mathbf d\mid \boldsymbol{\xi})
=
(2\pi\sigma^2)^{-n_d/2}
\exp\left(
-\frac{\|\mathbf d-g(\boldsymbol{\xi})\|_2^2}{2\sigma^2}
\right),
\end{equation}
with $g$ denoting the forward map from the parameter space to the observation space.

To sample from this posterior distribution, we employ an adaptive preconditioned Crank--Nicolson Markov chain Monte Carlo (pCN-MCMC) method \cite{Cotter2013, Hu2017}, which is well suited for high-dimensional inverse problems and does not require gradient information. For a Gaussian prior $\mathcal N(0,B)$, the standard pCN proposal is
\begin{equation}\label{eq:pcn_proposal}
\boldsymbol{\xi}^*
=
\sqrt{1-\beta_1^2}\,\boldsymbol{\xi}
+\beta_1 \boldsymbol{w}_1,
\qquad
\boldsymbol{w}_1\sim \mathcal N(0,B).
\end{equation}
To improve sampling efficiency, we further adopt an adaptive version in which the prior covariance is replaced by an empirical approximation $C$ of the posterior covariance computed from previous samples:
\begin{equation}\label{eq:adaptive_pcn_proposal}
\boldsymbol{\xi}^*
=
\bigl(I-\beta_2^2 C B^{-1}\bigr)^{1/2}\boldsymbol{\xi}
+\beta_2 \boldsymbol{w}_2,
\qquad
\boldsymbol{w}_2\sim \mathcal N(0,C).
\end{equation}
The proposed state is accepted with probability
\[
\alpha(\boldsymbol{\xi}^*,\boldsymbol{\xi})
=
\min\left\{
1,\,
\exp\bigl(\Phi(\boldsymbol{\xi})-\Phi(\boldsymbol{\xi}^*)\bigr)
\right\},
\]
where
\[
\Phi(\boldsymbol{\xi})
=
\frac{1}{2\sigma^2}\|\mathbf d-g(\boldsymbol{\xi})\|_2^2
\]
is the data misfit functional, namely, the negative log-likelihood up to an additive constant.
This adaptive pCN procedure serves as the subroutine $\mathrm{Infer}(\cdot)$ in Algorithm~\ref{alg:adaptive_placement}, and its implementation is summarized in Algorithm~\ref{alg:adaptive-pcn-mcmc}.

\begin{algorithm} [!htbp]
\caption{Adaptive pCN MCMC (Inference Subroutine)}\label{alg:adaptive-pcn-mcmc}
\begin{algorithmic}[1]
\Require Prior covariance $B$, parameters $\beta_1,\beta_2$, noise level $\sigma$, initial state $\boldsymbol{\xi}_0$, observation time $\bar{t}$, sample counts $N_1$ and $N$, update frequency $k_0$
\Ensure Posterior samples $\{\boldsymbol{\xi}_i\}_{i=1}^{N}$ and corresponding PDE solutions at $\bar{t}$
\Statex
\For{$k=1$ to $N_1$}
    \State Propose $\boldsymbol{\xi}^*$ using the pCN scheme \eqref{eq:pcn_proposal}
    \State Accept $\boldsymbol{\xi}^*$ as $\xi_k$ with probability $\alpha(\boldsymbol{\xi}^*,\boldsymbol{\xi}_{k-1})$
\EndFor
\State Collect samples $\{\boldsymbol{\xi}_i\}_{i=1}^{N_1}$ and compute the empirical covariance $C$
\Statex
\For{$k=N_1+1$ to $N$}
    \If{$\operatorname{mod}(k,k_0+1)=0$}
        \State Update $C$ using all samples $\{\boldsymbol{\xi}_i\}_{i=1}^{k}$
    \EndIf
    \State Propose $\boldsymbol{\xi}^*$ using the adaptive pCN scheme \eqref{eq:adaptive_pcn_proposal}
    \State Accept $\boldsymbol{\xi}^*$ as $\boldsymbol{\xi}_k$ with probability $\alpha(\boldsymbol{\xi}^*,\boldsymbol{\xi}_{k-1})$
    \State Store $\boldsymbol{\xi}_k$ and the corresponding PDE solution at $\bar{t}$
\EndFor
\end{algorithmic}
\end{algorithm}

\subsection{Movement strategy of sensor.}

When the source location is poorly known a priori, a static sensor arrangement may easily produce measurements with limited information content, resulting in inefficient or unstable reconstruction. To overcome this difficulty, we introduce a closed-loop \emph{Measure--Infer--Move} strategy that movingally repositions a mobile sensor along $\partial\Omega$ on the basis of the observed boundary flux. The guiding principle is simple: by exploiting local variations of the boundary heat flux $\partial_n u$, the sensor can be steered toward boundary regions that are more sensitive to the unknown source, thereby producing more informative measurements under a fixed sampling budget.

To this end, the observation horizon is divided into successive time windows $\{[T_k,T_{k+1}]\}$. Within each window, the procedure consists of three steps:
\begin{enumerate}
    \item \textbf{Measure:} sample the boundary heat flux at the current sensor location $z^{(k)}$ at discrete times in the current window;
    \item \textbf{Infer:} update the accumulated data set $\mathcal D$ and infer the source parameters $\boldsymbol{\xi}$ using the Bayesian procedure described above;
    \item \textbf{Move:} at the end of the window, evaluate both the local flux and its angular derivative at $z^{(k)}$, and use this information to determine the next direction and step size of the sensor along the boundary.
\end{enumerate}

Intuitively, people would prefer the measurements with high magnitude, which could be written as $|\frac{\partial u}{\n}|$. Sequentially the movement strategy of sensor should ensure that the sensor moves from the location with low magnitude $|\frac{\partial u}{\n}|$ to the one with high magnitude. Since the path of sensor is contained by the boundary $\partial \Omega$, we need to use the information of tangential partial derivative of $|\frac{\partial u}{\n}|$, stated as $\frac{\partial}{\partial \tau}|\frac{\partial u}{\n}|$ and $\tau$ is the tangential vector of boundary. Fortunately, in this work $\Omega$ is set as the unit disc in $\mathbb R^2$. Hence, by the polar formulation \eqref{eq:polar_pde}, we have 
$$\frac{\partial}{\partial \tau}\Big(\big|\frac{\partial u}{\n}\big|\Big)=\frac{\partial}{\partial \theta}\Big(\big|\frac{\partial u}{\n}\big|\Big).$$
Precisely, we could introduce the movement strategy as follows: 
\begin{equation}\label{movement}
\text{measure}\ \Phi^{(k)}_\theta := \frac{\partial}{\partial \theta}\Big(\big|\frac{\partial u}{\n}(z^{(k)},T_{k+1})\big|\Big),
\quad
\text{then set}\ \mathrm{dir}=
\begin{cases}
\text{counterclockwise}, & \text{if } \Phi^{(k)}_\theta>0,\\
\text{clockwise}, & \text{if } \Phi^{(k)}_\theta\le 0.
\end{cases}
\end{equation}
Here 'dir' is the abbreviation of 'direction'. 
This strategy could drive the sensor toward locations where $| \frac{\partial u}{\n}| $ increases along the boundary. The step size $d_k$ and the corresponding travel time $b_{k+1}$ are defined by
\begin{equation}\label{eq:step_rule}
d_k=
\begin{cases}
mc_1\pi, & \mathrm{prev}=\varnothing\ \text{or}\ \mathrm{dir}=\mathrm{prev},\\
\lfloor m/2\rfloor c_1\pi, & \text{otherwise},
\end{cases}
\qquad
b_{k+1}=d_k/c,
\end{equation}
where $\mathrm{prev}$ records the previous direction, $c$ is the sensor speed, and $m,c_1$ are tuning parameters.

The iteration stops either when the sensor reaches a strict local maximum of $| \frac{\partial u}{\n}| $ on the boundary, or when a reversal of the movement direction is detected, in which case one final inference step is performed using the newly acquired data. The complete procedure is summarized in Algorithm~\ref{alg:adaptive_placement}.

\begin{algorithm}[!htbp]
\caption{Movement Strategy for Source Identification}
\label{alg:adaptive_placement}
\footnotesize
\begin{algorithmic}[1]
\Require Time partition $\{T_k\}_{k=0}^{n+1}$, speed $c$, parameters $m,c_1$, samples per window $N_t$, initial point $z^{(0)}\in\partial\Omega$
\State $k\gets 0;\quad b_0\gets 0;\quad \mathcal D\gets \emptyset;\quad \mathrm{prev}\gets \varnothing$
\While{true}
    \State Choose a discrete sampling set
    \[
    d_t^{(k)}=\{t_j^{(k)}\}_{j=1}^{N_t}\subset [T_k+b_k,\;T_{k+1}]
    \]
    such that $T_{k+1}\in d_t^{(k)}$ (e.g.\ uniformly spaced)

    \State \textbf{Measure:} Solve the forward problem \eqref{eq:linear_system} with the true parameter $\boldsymbol{\xi}_{\text{true}}$ to obtain the numerical solution $u_h$. Then collect the noisy flux measurements at the sensor location $z^{(k)}$ over the time set $d_t^{(k)}$:
    \[
    \mathbf d^{(k)}
    :=
    \left[
    \frac{\partial u_h}{\partial \vec n}(z^{(k)},t)
    \right]_{t\in d_t^{(k)}}
    +
    \bm\epsilon^{(k)},
    \qquad
    \bm\epsilon^{(k)}\sim\mathcal N(\mathbf 0,\sigma^2 I_{N_t}).
    \]

    \State $\mathcal D \gets \mathcal D\cup \{(d_t^{(k)},\mathbf d^{(k)})\}$

    \State \textbf{Infer:} $\boldsymbol\xi^{(k)} \gets \mathrm{Infer}(\mathcal D)$ \Comment{Uses Algorithm~\ref{alg:adaptive-pcn-mcmc}}

    \State At time $T_{k+1}$, measure the noisy boundary flux at $z^{(k)}$ and at its two neighboring boundary points $z_-^{(k)}$ and $z_+^{(k)}$:
    \[
    d_{\mathrm{end}}^{(k)}(z)
    :=
    \frac{\partial u_h}{\partial \vec n}(z,T_{k+1})
    +
    \epsilon_{\mathrm{end}}^{(k)}(z),
    \qquad
    z\in\{z_-^{(k)},\,z^{(k)},\,z_+^{(k)}\},
    \]
    \[
    \epsilon_{\mathrm{end}}^{(k)}(z)\sim\mathcal N(0,\sigma^2).
    \]

    \State Compute
    \[
    \frac{\partial}{\partial \theta}
    \left(
    \left|d_{\mathrm{end}}^{(k)}(z^{(k)})\right|
    \right)
    \approx
    \frac{
    \left|d_{\mathrm{end}}^{(k)}(z_+^{(k)})\right|
    -
    \left|d_{\mathrm{end}}^{(k)}(z_-^{(k)})\right|
    }{2\Delta\theta}
    \]
    at $z^{(k)}$ by a local finite difference

    \State $\mathrm{dir}\gets \mathrm{Direction}\!\left(
    \frac{\partial}{\partial \theta}
    \left(
    \left|d_{\mathrm{end}}^{(k)}(z^{(k)})\right|
    \right)
    \right)$ via \eqref{movement}

    \State Compute $(d_k,b_{k+1})$ from \eqref{eq:step_rule}

    \State \textbf{Move:} Update $z^{(k+1)}$ by moving from $z^{(k)}$ along $\partial\Omega$ in direction $\mathrm{dir}$ over distance $d_k$

    \State $\mathrm{rev}\gets(\mathrm{prev}\neq\varnothing\ \wedge\ \mathrm{dir}\neq \mathrm{prev});\quad \mathrm{prev}\gets\mathrm{dir}$

    \If{$\left|d_{\mathrm{end}}^{(k)}(z^{(k)})\right|$ is a strict local maximum among $\left|d_{\mathrm{end}}^{(k)}(z_-^{(k)})\right|$, $\left|d_{\mathrm{end}}^{(k)}(z^{(k)})\right|$, and $\left|d_{\mathrm{end}}^{(k)}(z_+^{(k)})\right|$ at time $T_{k+1}$}
        \State \Return $\boldsymbol\xi^{(k)}$
    \ElsIf{$\mathrm{rev}$}
        \State Take one additional sensing window on $[T_{k+1}+b_{k+1},\;T_{k+2}]$ and update $\mathcal D$
        \State \Return $\mathrm{Infer}(\mathcal D)$
    \Else
        \State $k\gets k+1$
    \EndIf
\EndWhile
\end{algorithmic}
\end{algorithm}

\section{Numerical experiments}\label{sec_experiment}

We validate the proposed adaptive algorithm through a series of numerical experiments. Synthetic ``ground truth'' data are generated by solving the forward problem on a fine spatial grid of $23\times 23$ nodes with time step $\Delta t=1/400$. To avoid the inverse crime, all inversion procedures are carried out on a coarser $20\times 20$ grid.

Several representative geometries are considered for the unknown source domain $D$, each parameterized by a vector $\boldsymbol{\xi}$. The source term is described by the characteristic function
\[
\chi_{{}_{x\in D}}(x)=
\begin{cases}
b, & \text{if } x\in D(\boldsymbol{\xi}),\\[2pt]
0, & \text{otherwise},
\end{cases}
\]
where $b$ denotes the source strength.

\begin{example}[Circular Source]\label{examcircle}
A circular source is defined in Cartesian coordinates by
\begin{align*}
x_1 &= \xi_1\cos\xi_2+\xi_3\cos\theta,\\
x_2 &= \xi_1\sin\xi_2+\xi_3\sin\theta,\qquad \theta\in[0,2\pi],
\end{align*}
where the unknown parameter vector is $\boldsymbol{\xi}=(\xi_1,\xi_2,\xi_3)$. Here, $(\xi_1,\xi_2)$ represent the polar coordinates of the centre, and $\xi_3$ denotes the radius.
\end{example}

Within the Bayesian framework, sampling is performed in an unconstrained space. We introduce reparameterization variables $\boldsymbol{z}=(z_1,z_2,z_3)\in\mathbb{R}^3$ and define the physical parameters by
\begin{align*}
\xi_1(z_1) &= \frac{1}{\pi}\arctan z_1+\frac12
&&\text{(maps to }(0,1)),\\[2pt]
\xi_2(z_2) &= 2\arctan z_2+\pi
&&\text{(maps to }(0,2\pi)),\\[2pt]
\xi_3(z_3) &= \frac{1}{\pi}\arctan z_3+\frac12
&&\text{(maps to }(0,1)).
\end{align*}
A Gaussian prior $\mathcal{N}(\boldsymbol{0},I_3)$ is assigned to $\boldsymbol{z}$. Posterior inference for $\boldsymbol{\xi}$ is then performed by sampling $\boldsymbol{z}$ and applying the above transformations.

For this example, the source strength is fixed at $b=50$. The true parameters are taken as
\[
\boldsymbol{\xi}_{\text{true}}=(0.7,\pi/2,0.2)^\top,
\]
which correspond to a circle centred at polar coordinates $(0.7,\pi/2)$ with radius $0.2$. The measurement noise variance is set to $\sigma^2=0.05^2$, corresponding to an additive absolute error with standard deviation $0.05$. The adaptive pCN-MCMC algorithm is run with $N_1=0$ burn-in iterations and $N=10^4$ total iterations, and the empirical covariance matrix is updated every $k_0=2.5\times10^3$ iterations. The step-size parameters $\beta_1$ and $\beta_2$ are tuned so that the acceptance rate remains between $25\%$ and $35\%$. For Algorithm~\ref{alg:adaptive_placement}, we set $m=10$, $c_1=1/20$, $c=20\pi$, and $N_t=80$.

Figure~\ref{circle} illustrates the evolution of the reconstruction over four successive time windows, where the posterior mean is plotted in blue. The results show that the proposed strategy is able to recover the circular source rapidly and stably from sparse observations, and that the reconstruction improves progressively as the sensor location is updated.

\begin{figure}[!htbp]
\centering
\subfigure[{$T\in[1/400,80/400]$, sensor at $26\pi/40$.}]{\includegraphics[width=0.38\textwidth]{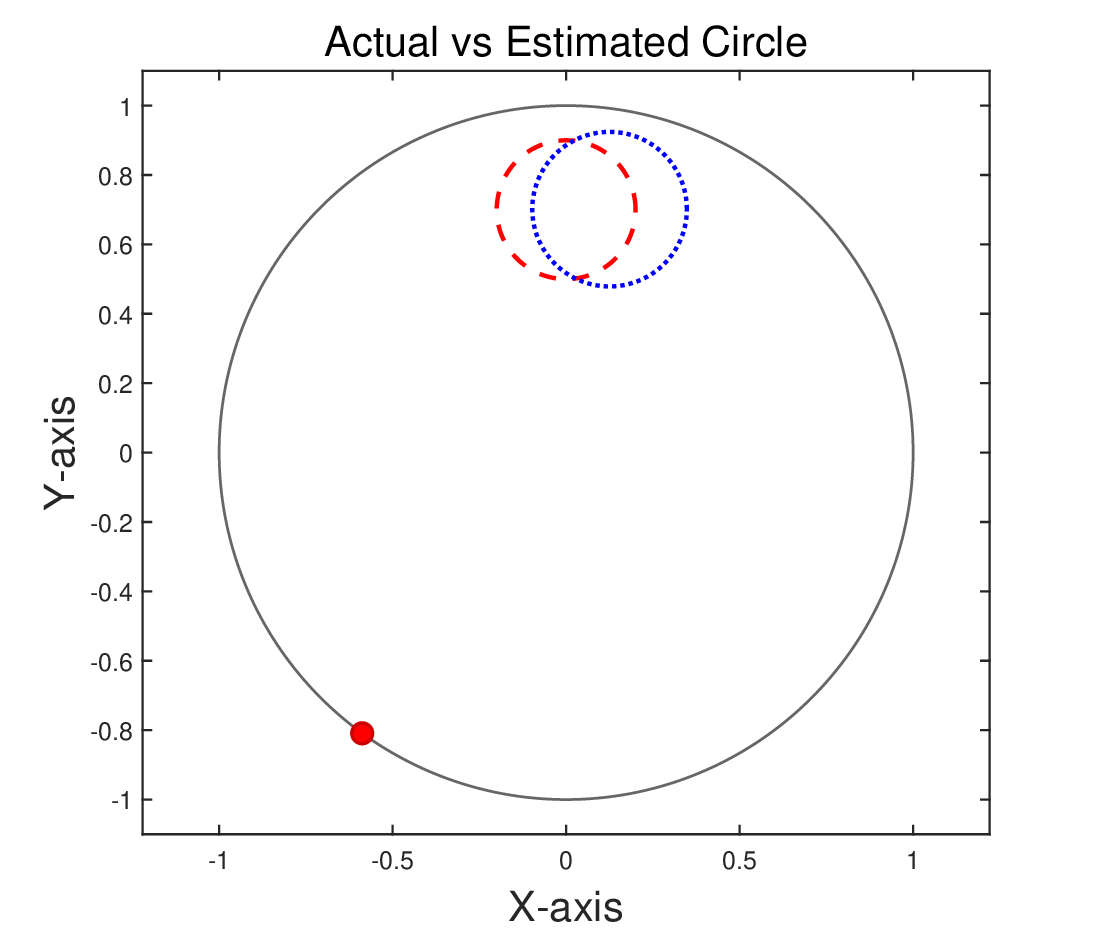}}
\subfigure[{$T\in[90/400,170/400]$, sensor at $16\pi/40$.}]{\includegraphics[width=0.38\textwidth]{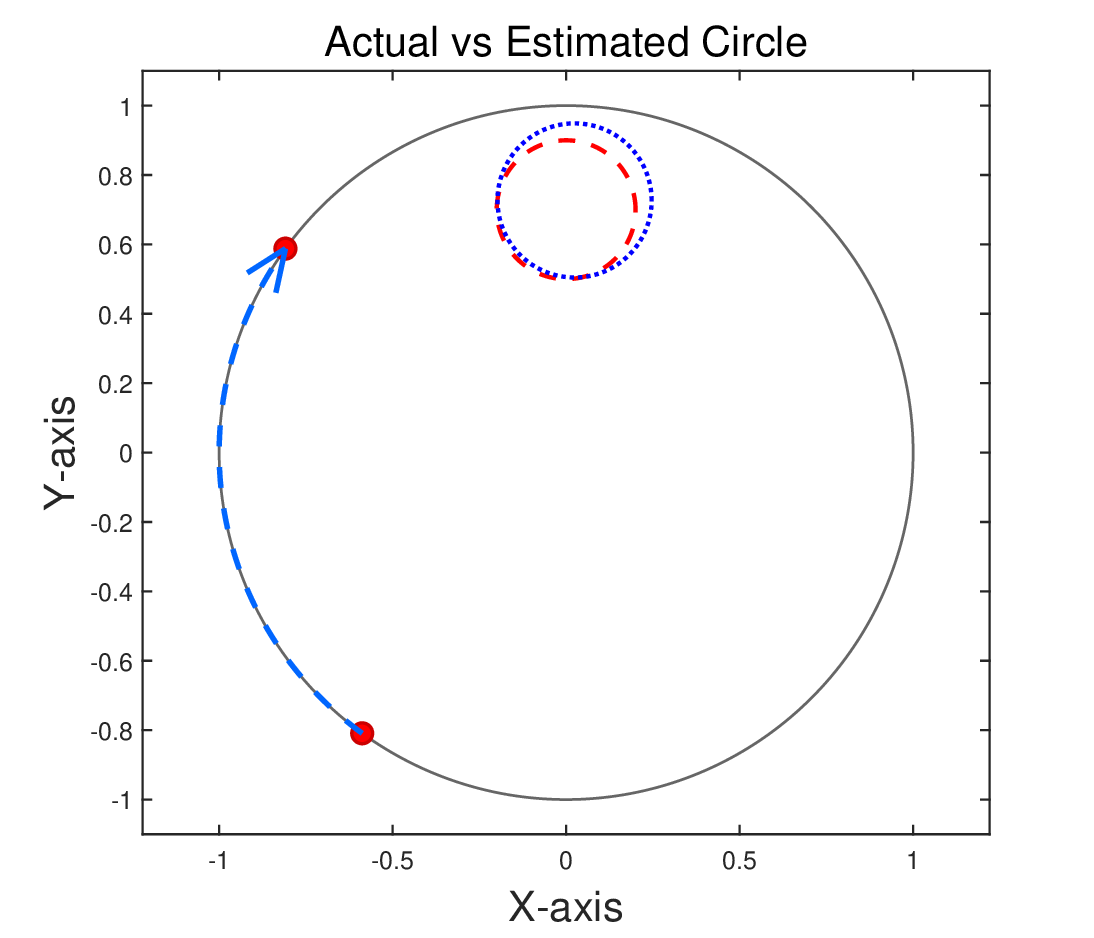}}
\subfigure[{$T\in[180/400,260/400]$, sensor at $6\pi/40$.}]{\includegraphics[width=0.38\textwidth]{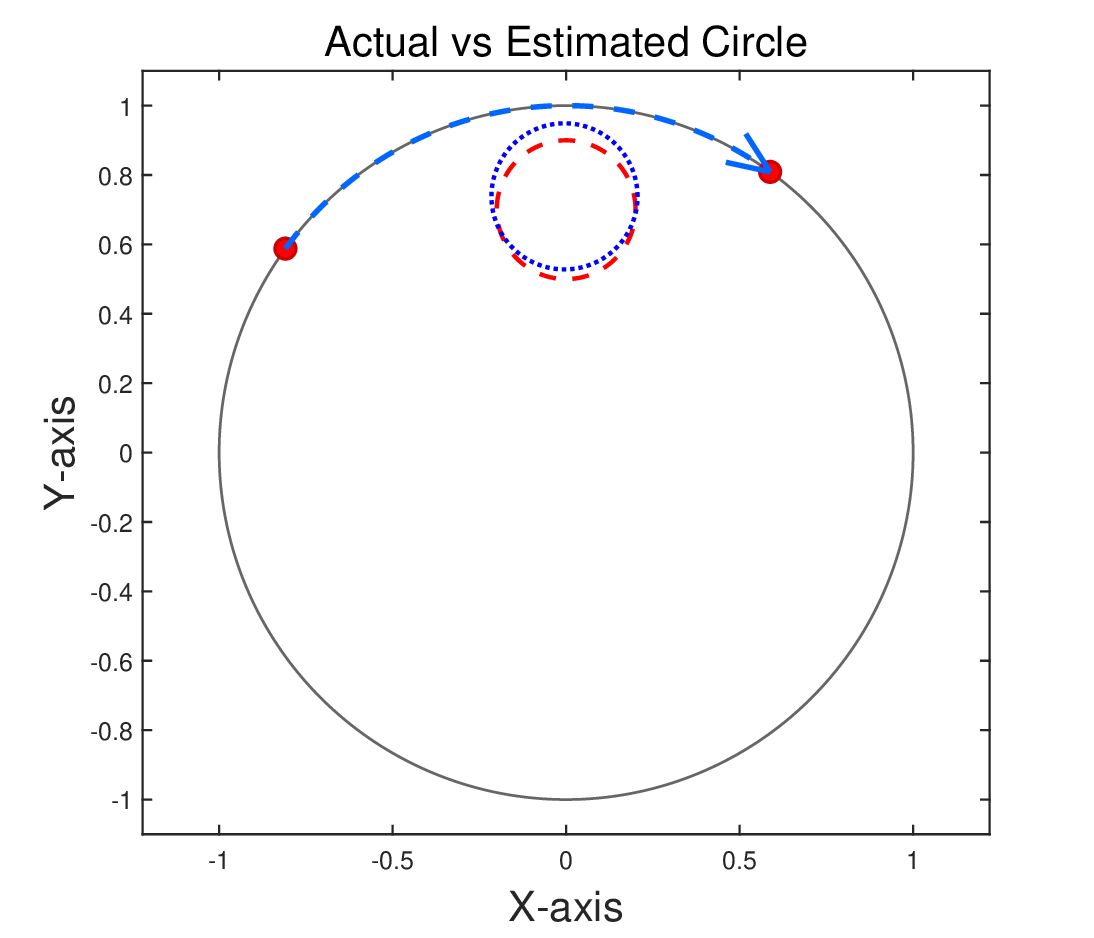}}
\subfigure[{$T\in[265/400,345/400]$, sensor at $11\pi/40$.}]{\includegraphics[width=0.38\textwidth]{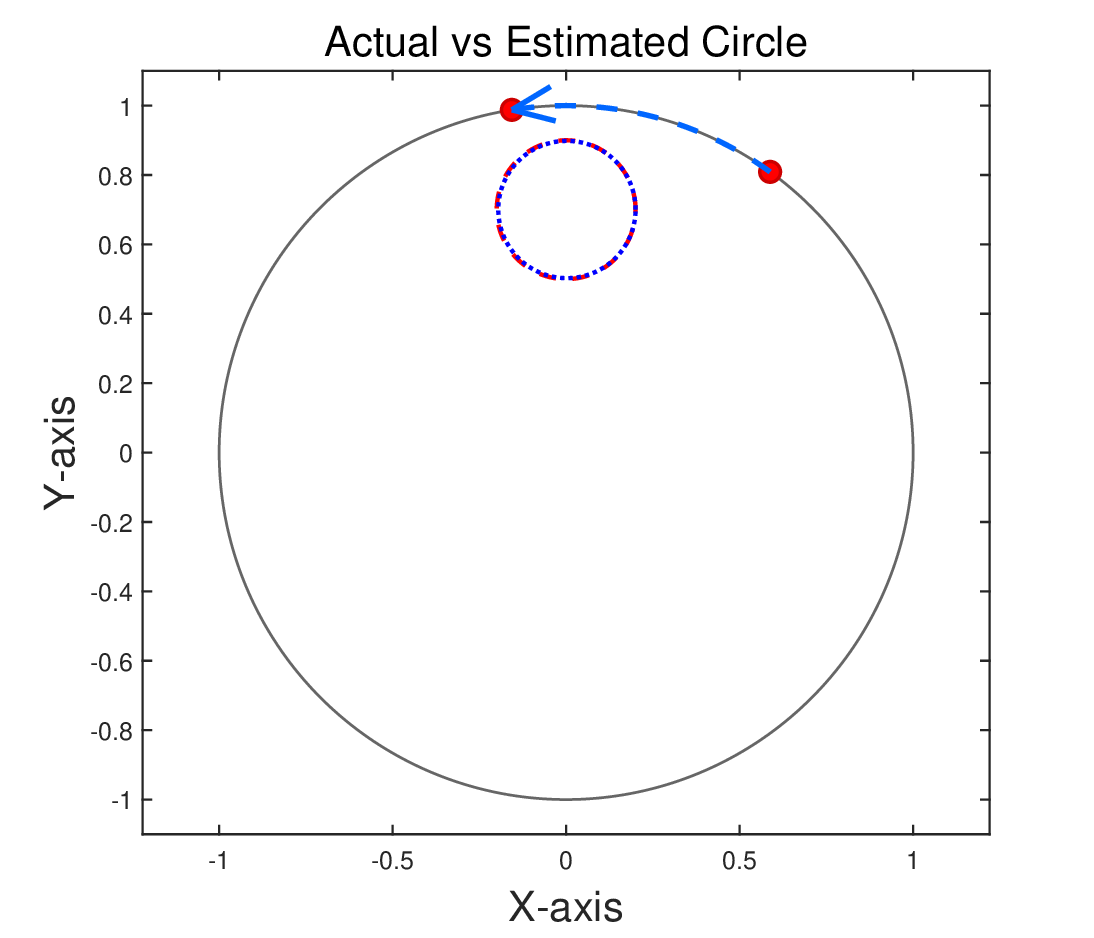}}
\caption{Evolution of the reconstruction for a circular source with $N=10000$.}
\label{circle}
\end{figure}

To further assess the impact of the moving sensor strategy, Figure~\ref{fig:bayecircle} displays the trace plots of the posterior samples for $\xi_1$, $\xi_2$, and $\xi_3$ at the initial and final stages, respectively. A pronounced reduction in sample dispersion is observed in the final stage, indicating that the sequentially selected sensor locations provide more informative measurements and substantially reduce posterior uncertainty.

\begin{figure}[!htbp]
\centering
\subfigure[{Corresponding to Fig.~\ref{circle}(a)}]{\includegraphics[width=0.38\textwidth]{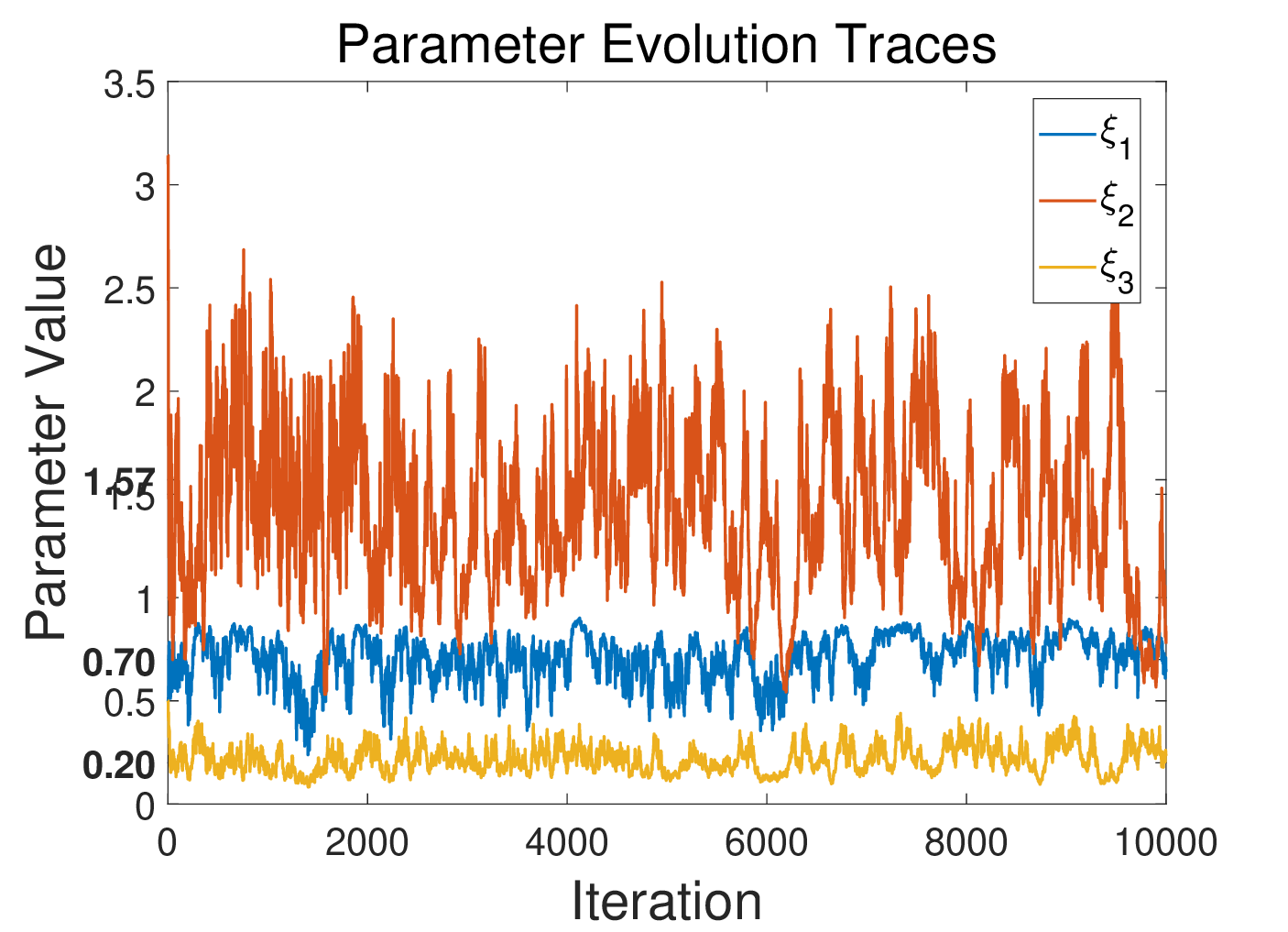}}
\subfigure[{Corresponding to Fig.~\ref{circle}(d)}]{\includegraphics[width=0.38\textwidth]{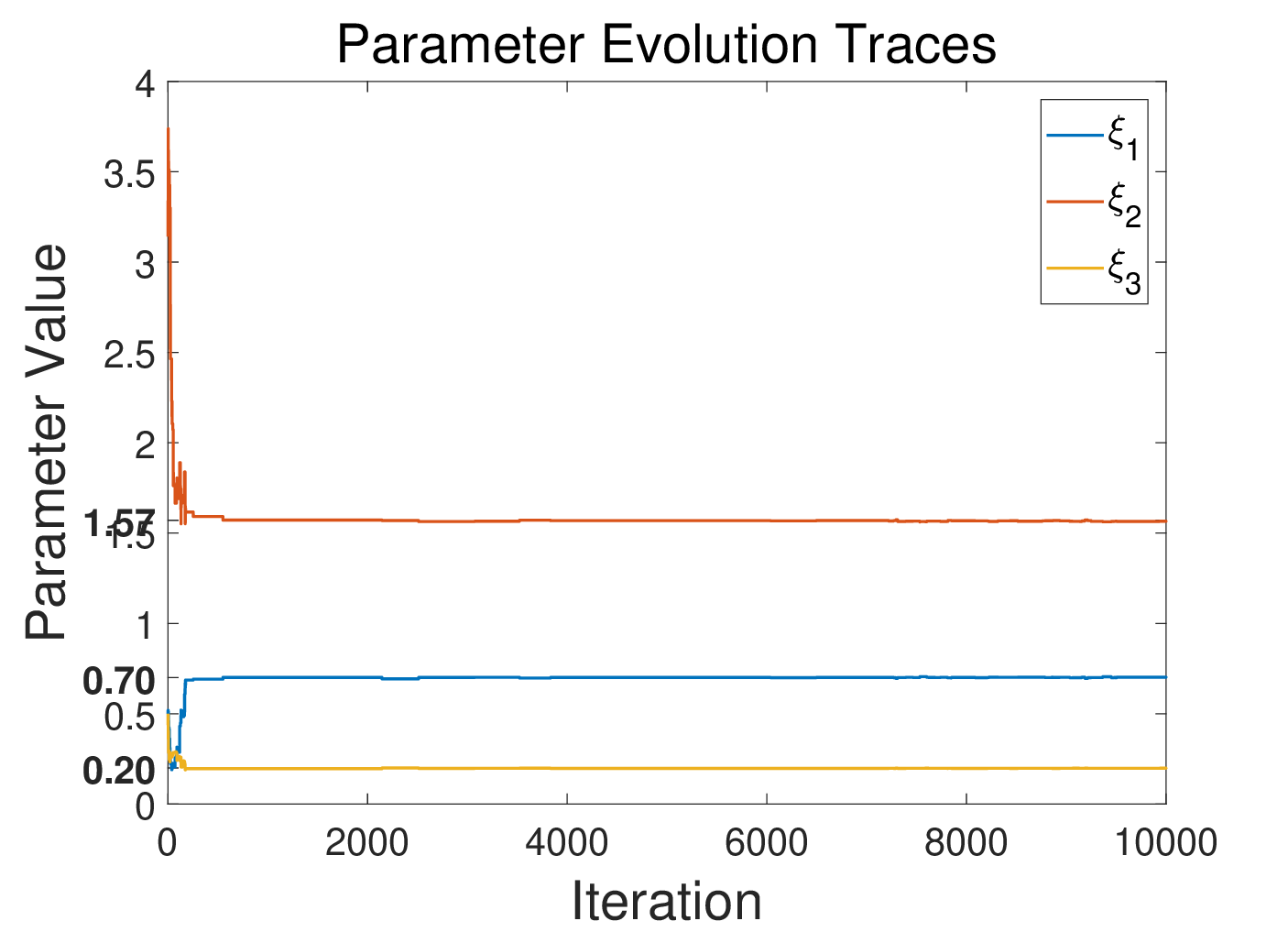}}
\caption{Trace plots of the posterior samples for the circular source with $N=10000$. The true parameters are $\boldsymbol{\xi}_{\text{true}}=(0.7,\pi/2,0.2)^\top$. The moving sensor placement strategy (right figure) leads to markedly narrower confidence bands, signifying reduced sample variance.}
\label{fig:bayecircle}
\end{figure}

We next examine the performance under a reduced sampling budget with only $N=500$ iterations. As shown in Figure~\ref{fig:bayecircleacn}, the moving sensor strategy still yields a noticeably more accurate approximation than the fixed-sensor counterpart, further demonstrating the efficiency of the proposed approach.

\begin{figure}[!htbp]
\centering
\subfigure[{Corresponding to Fig.~\ref{circle}(a)}]{\includegraphics[width=0.38\textwidth]{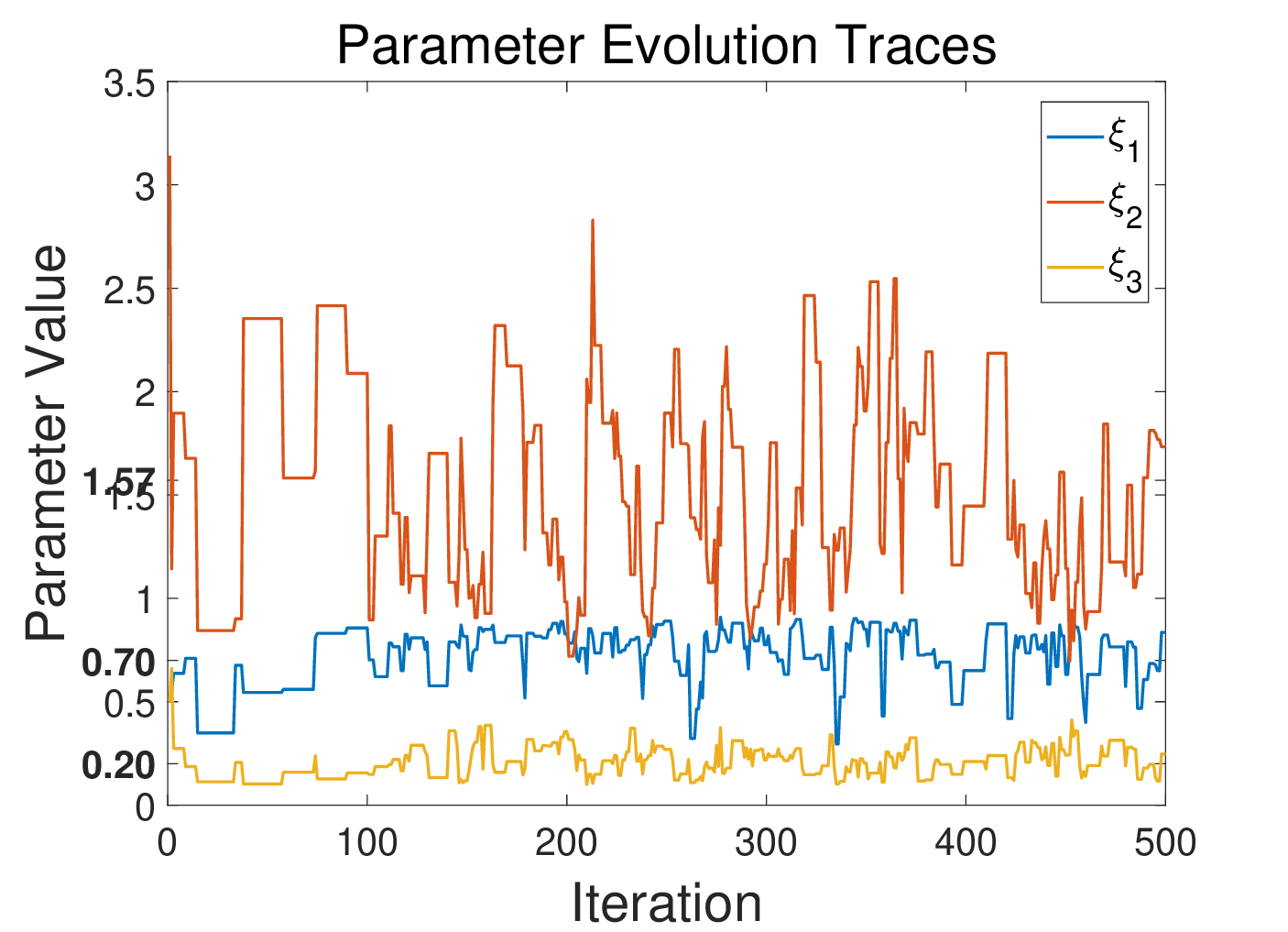}}
\subfigure[{Corresponding to Fig.~\ref{circle}(d)}]{\includegraphics[width=0.38\textwidth]{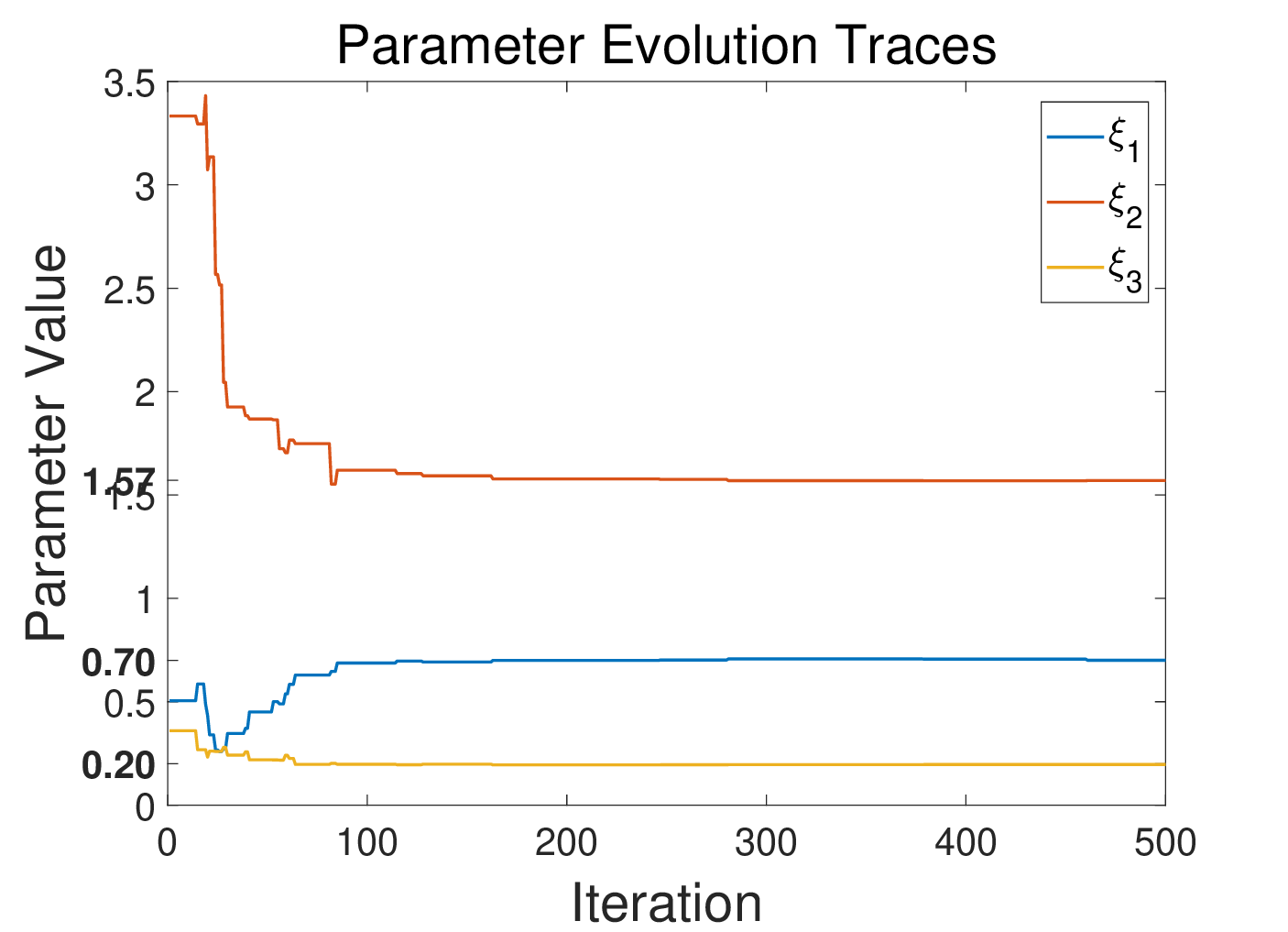}}
\caption{Trace plots of the posterior samples for the circular source. The true parameters are $\boldsymbol{\xi}_{\text{true}}=(0.7,\pi/2,0.2)^\top$. With only $N=500$ iterations, the moving sensor placement strategy (right figure) yields a more accurate approximation compared to the left figure.}
\label{fig:bayecircleacn}
\end{figure}

\FloatBarrier

\begin{example}[Kite-Shaped Source]
The boundary of a kite-shaped source is described by
\begin{align*}
x_1 &= \xi_1\cos\xi_2+\xi_3\bigl[\cos\theta+0.65\cos 2\theta-0.65\bigr],\\
x_2 &= \xi_1\sin\xi_2+1.5\,\xi_3\sin\theta,\qquad \theta\in[0,2\pi],
\end{align*}
with unknown parameters $\boldsymbol{\xi}=(\xi_1,\xi_2,\xi_3)$. The same bijective mappings as in Example~\ref{examcircle} are applied to map the unconstrained variables to the physical intervals $(0,1)$, $(0,2\pi)$, and $(0,1)$, respectively. A Gaussian prior $\mathcal{N}(\boldsymbol{0},I_3)$ is imposed on the transformed variables.
\end{example}

The true parameters are chosen as
\[
\boldsymbol{\xi}_{\text{true}}=(0.4,\pi/3,0.2)^\top,
\]
which correspond to a kite centred at polar coordinates $(0.4,\pi/3)$ with scaling factor $0.2$. We set $b=50$ and $\sigma^2=0.05^2$. The adaptive pCN-MCMC algorithm is run with $N_1=0$, $N=10^4$, and $k_0=2.5\times10^3$, with step sizes tuned to maintain an acceptance rate between $25\%$ and $35\%$. In Algorithm~\ref{alg:adaptive_placement}, we take $m=10$, $c_1=1/20$, $c=20\pi$, and $N_t=80$.

Figure~\ref{kite} presents the evolution of the reconstruction for the kite-shaped source. The posterior mean gradually approaches the true geometry as additional data are collected, showing that the proposed method remains effective for this non-convex shape and is able to exploit sparse boundary measurements efficiently.

\begin{figure}[!htbp]
\centering
\subfigure[{$T\in[1/400,80/400]$, sensor at $26\pi/40$.}]{\includegraphics[width=0.38\textwidth]{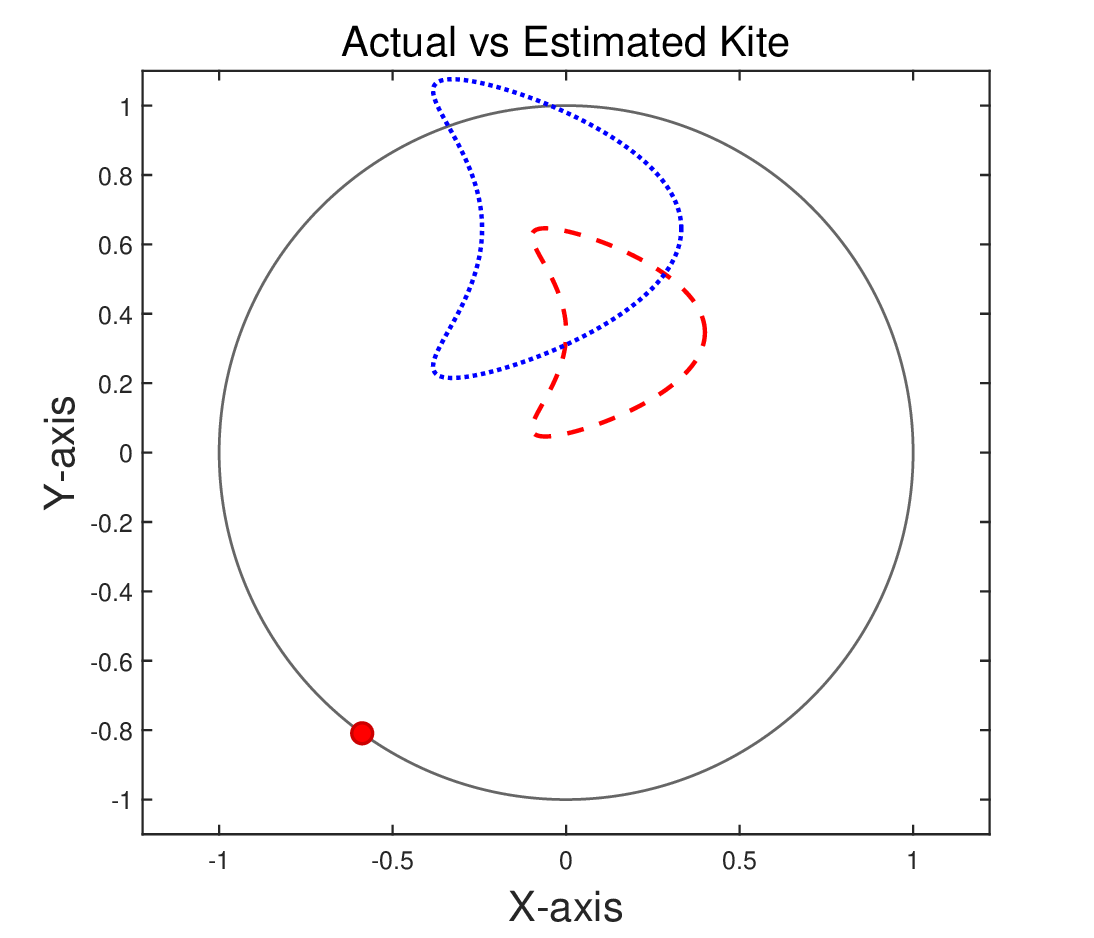}}
\subfigure[{$T\in[90/400,170/400]$, sensor at $16\pi/40$.}]{\includegraphics[width=0.38\textwidth]{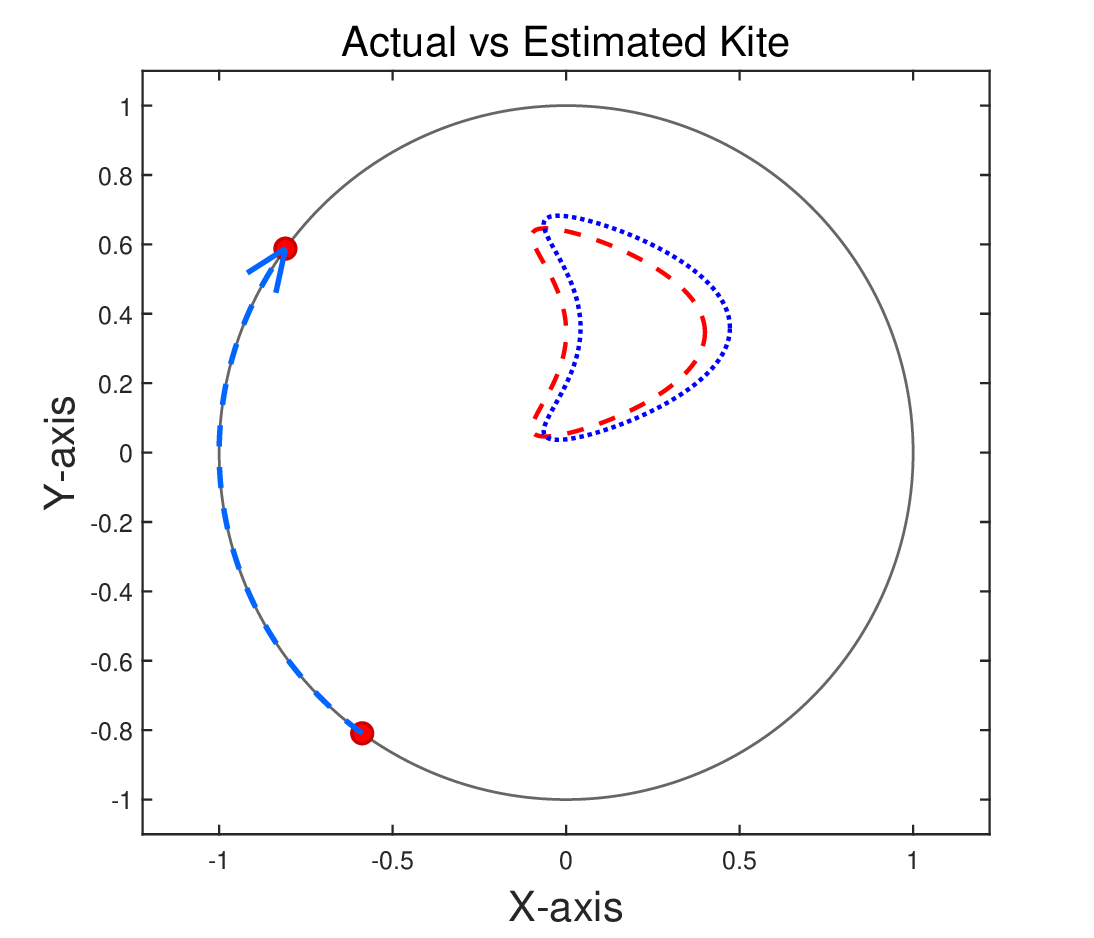}}
\subfigure[{$T\in[180/400,260/400]$, sensor at $6\pi/40$.}]{\includegraphics[width=0.38\textwidth]{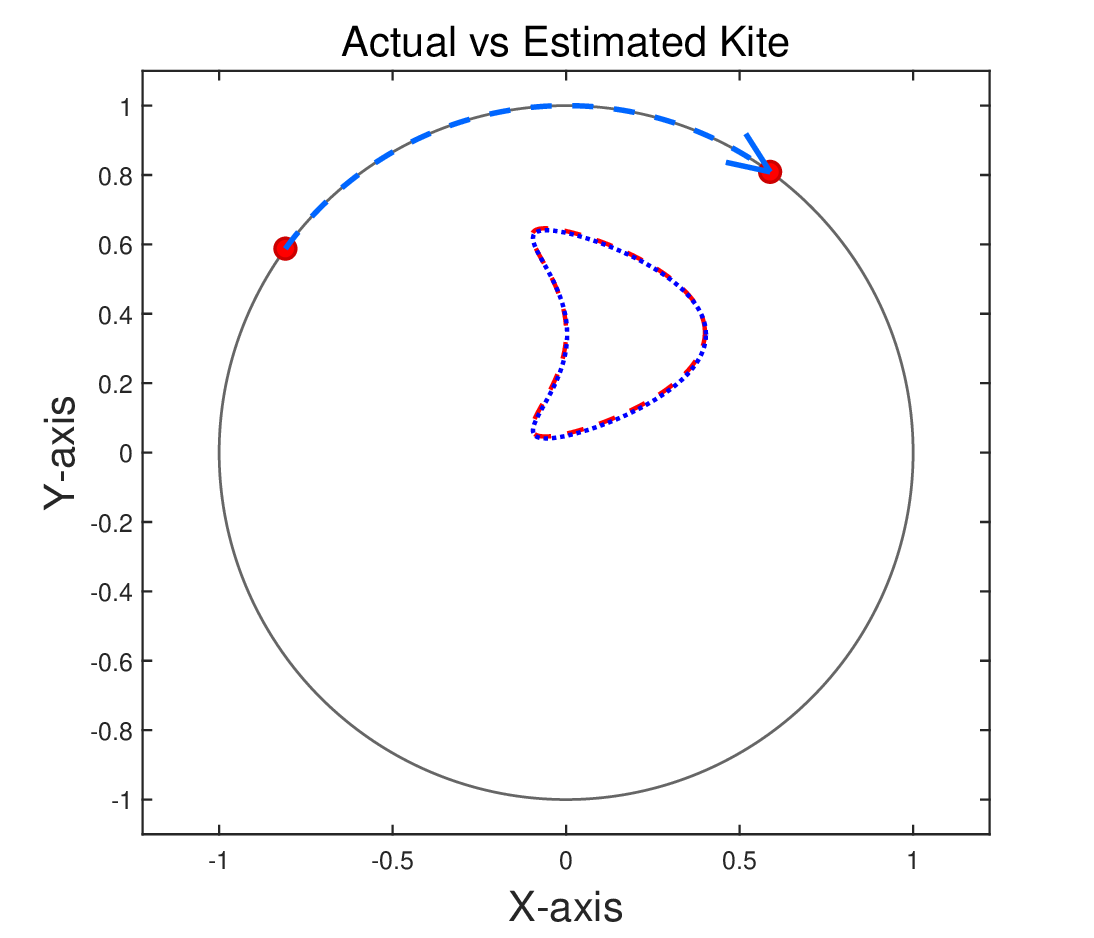}}
\subfigure[{$T\in[265/400,345/400]$, sensor at $11\pi/40$.}]{\includegraphics[width=0.38\textwidth]{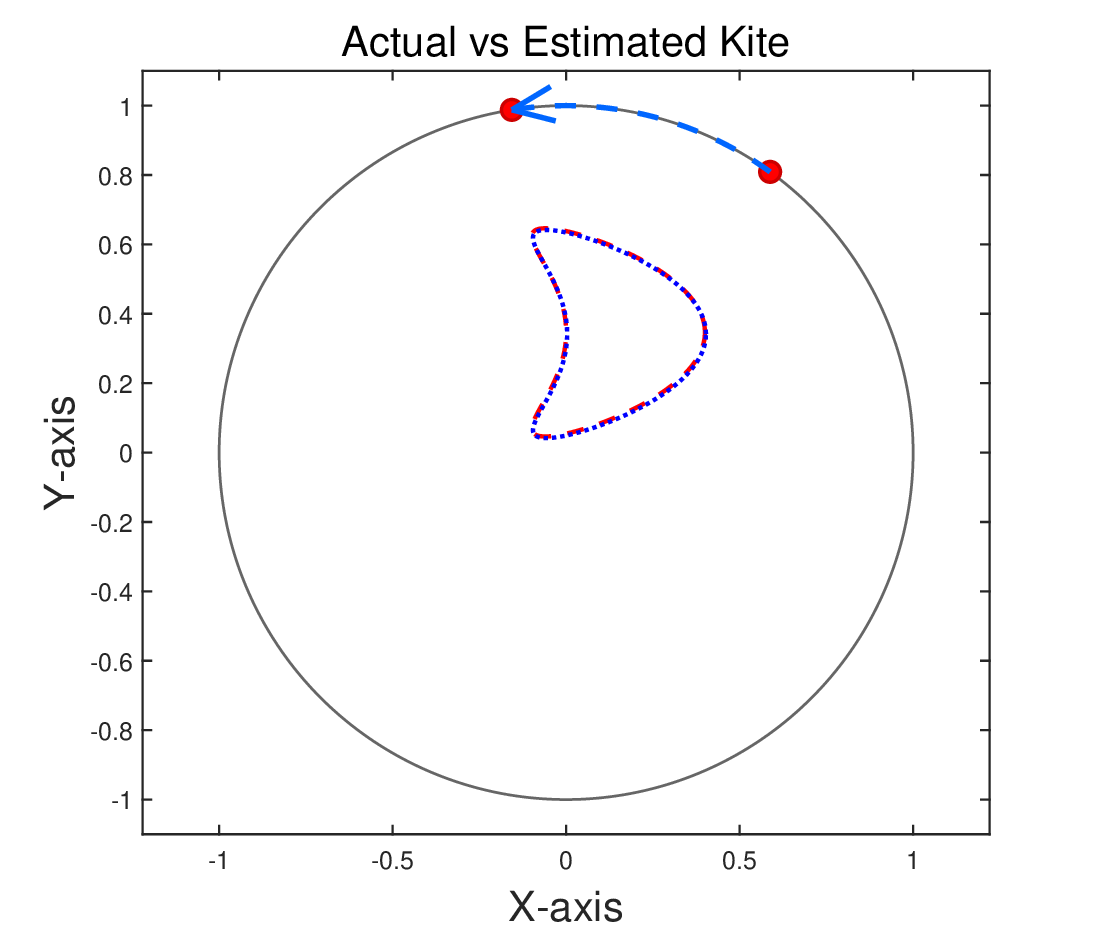}}
\caption{Evolution of the reconstruction for a kite-shaped source with $N=10000$.}
\label{kite}
\end{figure}

The corresponding posterior trace plots are shown in Figure~\ref{fig:bayekite}. Compared with the initial fixed-sensor stage, the final moving-sensor stage yields substantially narrower sample bands for all three parameters. This indicates that the adaptive placement mechanism improves the informativeness of the measurements and enhances the concentration of the posterior distribution.

\begin{figure}[!htbp]
\centering
\subfigure[{Corresponding to Fig.~\ref{kite}(a)}]{\includegraphics[width=0.38\textwidth]{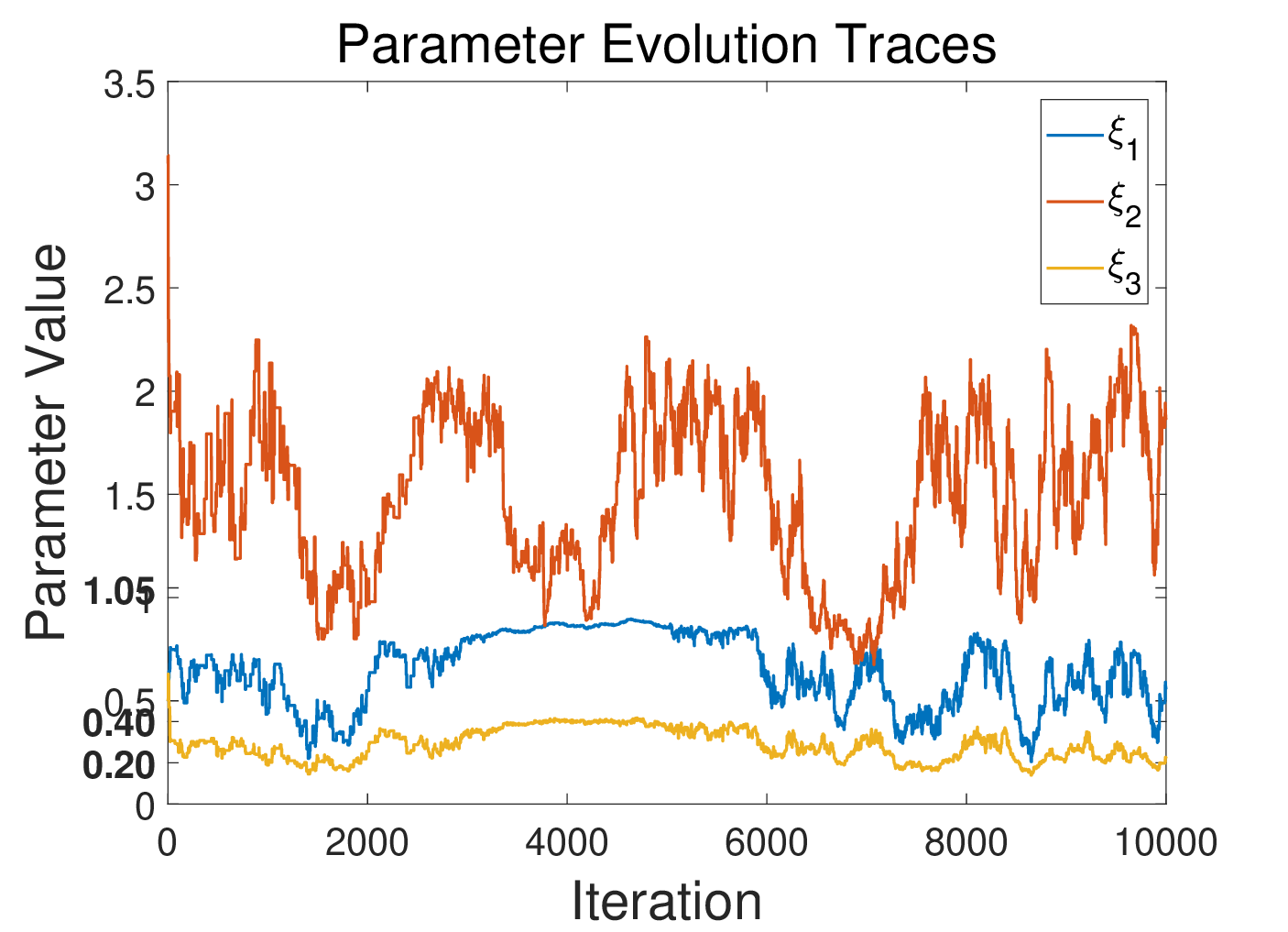}}
\subfigure[{Corresponding to Fig.~\ref{kite}(d)}]{\includegraphics[width=0.38\textwidth]{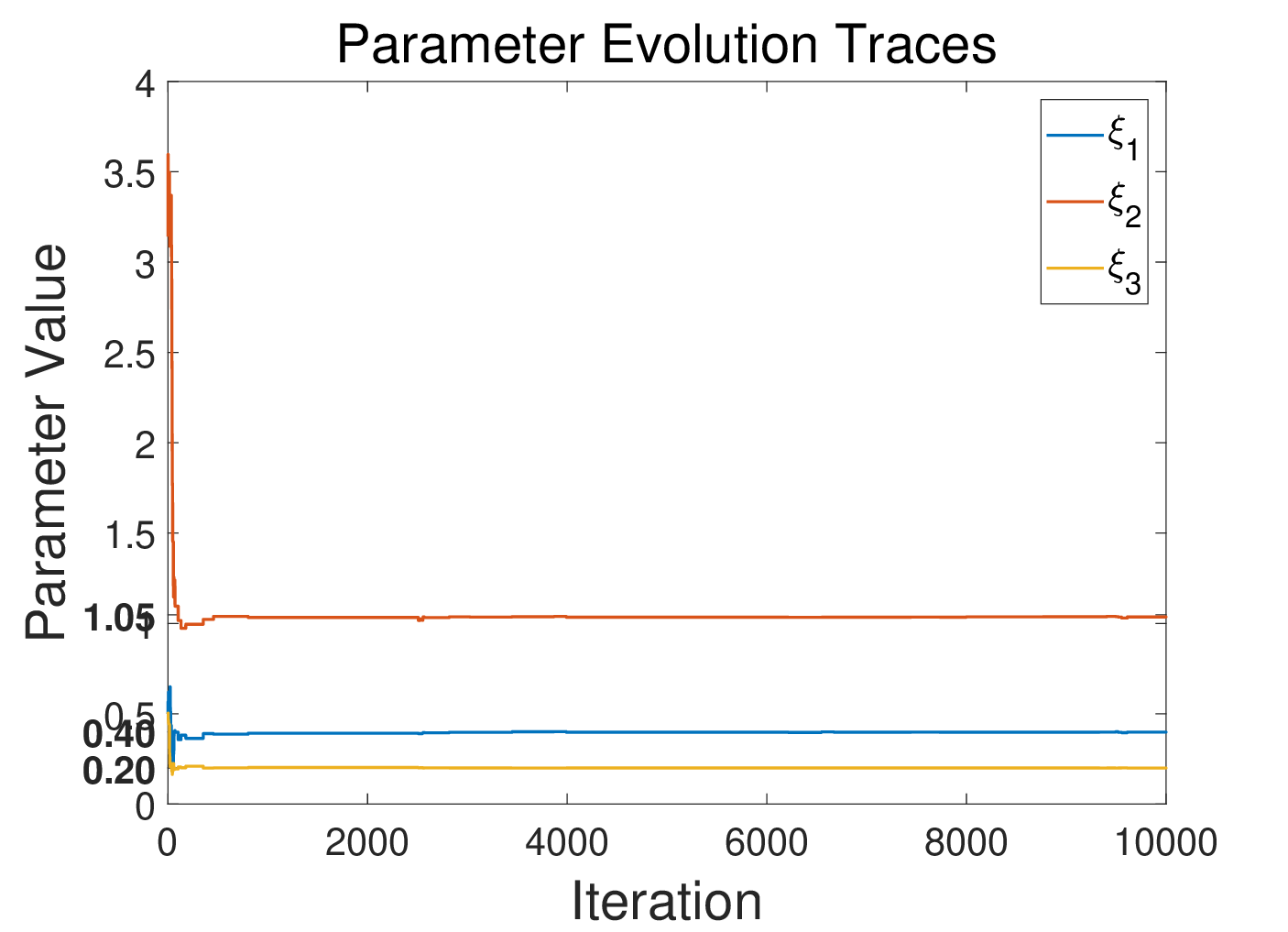}}
\caption{Trace plots of the posterior samples for the kite-shaped source with $N=10000$. The true parameters are $\boldsymbol{\xi}_{\text{true}}=(0.4,\pi/3,0.2)^\top$. The moving sensor strategy (right figure) results in narrower confidence bands, indicating decreased sample variance.}
\label{fig:bayekite}
\end{figure}

To evaluate the efficiency of the method under a smaller computational budget, we also consider the case $N=500$. Figure~\ref{fig:bayekiteacn} shows that the moving sensor strategy still achieves a visibly better approximation, confirming its practical advantage in reducing the number of required samples.

\begin{figure}[!htbp]
\centering
\subfigure[{Corresponding to Fig.~\ref{kite}(a)}]{\includegraphics[width=0.38\textwidth]{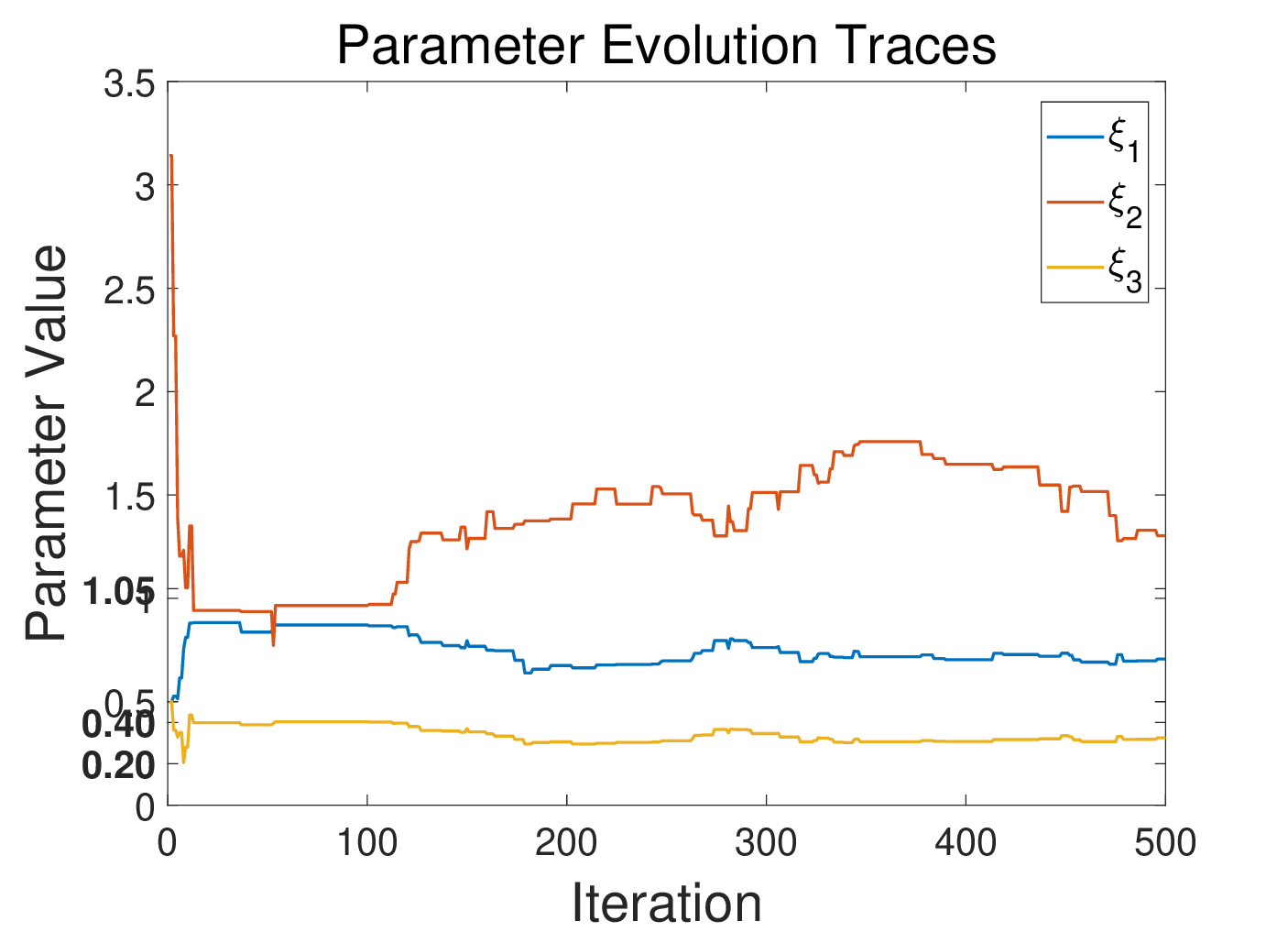}}
\subfigure[{Corresponding to Fig.~\ref{kite}(d)}]{\includegraphics[width=0.38\textwidth]{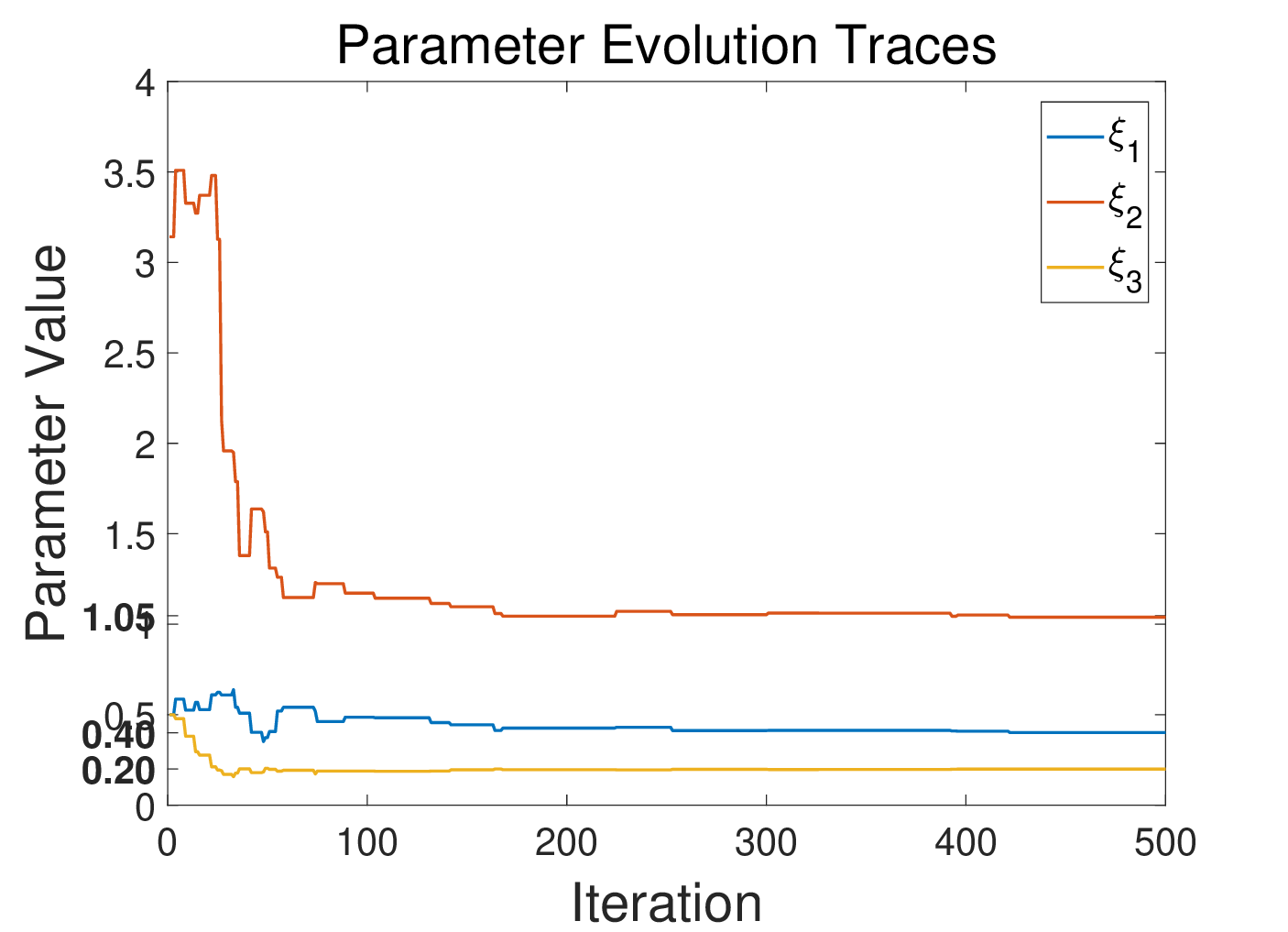}}
\caption{Trace plots of posterior samples for the kite-shaped source. The true parameters are $\boldsymbol{\xi}_{\text{true}}=(0.4,\pi/3,0.2)^\top$. With only $N=500$ iterations, the moving sensor placement strategy (right figure) achieves a more accurate approximation compared to the left figure.}
\label{fig:bayekiteacn}
\end{figure}

\FloatBarrier

\begin{example}[Four-Leaf Shaped Source]
A four-leaf shaped source is defined by
\begin{align*}
r(\theta) &= \xi_3\bigl[1+0.2\cos(4\theta)\bigr],\\
x_1 &= \xi_1\cos\xi_2+r(\theta)\cos\theta,\\
x_2 &= \xi_1\sin\xi_2+r(\theta)\sin\theta,\qquad \theta\in[0,2\pi],
\end{align*}
with parameters $\boldsymbol{\xi}=(\xi_1,\xi_2,\xi_3)$. The same bounded transformations as in the previous examples are employed, and a Gaussian prior $\mathcal{N}(\boldsymbol{0},I_3)$ is imposed on the transformed variables.
\end{example}

The true source is centred at polar coordinates $(0.4,\pi/2)$ with size parameter $0.7$, that is,
\[
\boldsymbol{\xi}_{\text{true}}=(0.4,\pi/2,0.7)^\top.
\]
We set $b=50$, $\sigma^2=0.05^2$, and use the adaptive pCN-MCMC with $N_1=0$, $N=10^4$, $k_0=2.5\times10^3$, and step sizes tuned for an acceptance rate between $25\%$ and $35\%$. In Algorithm~\ref{alg:adaptive_placement}, we set $m=10$, $c_1=1/20$, $c=20\pi$, and $N_t=80$.

Figure~\ref{leaf} shows the reconstruction process for the four-leaf shaped source. The posterior mean evolves steadily toward the true multi-lobed geometry, indicating that the proposed method is capable of handling more intricate source shapes while maintaining stable reconstruction quality from limited boundary data.

\begin{figure}[!htbp]
\centering
\subfigure[{$T\in[1/400,80/400]$, sensor at $29\pi/40$.}]{\includegraphics[width=0.38\textwidth]{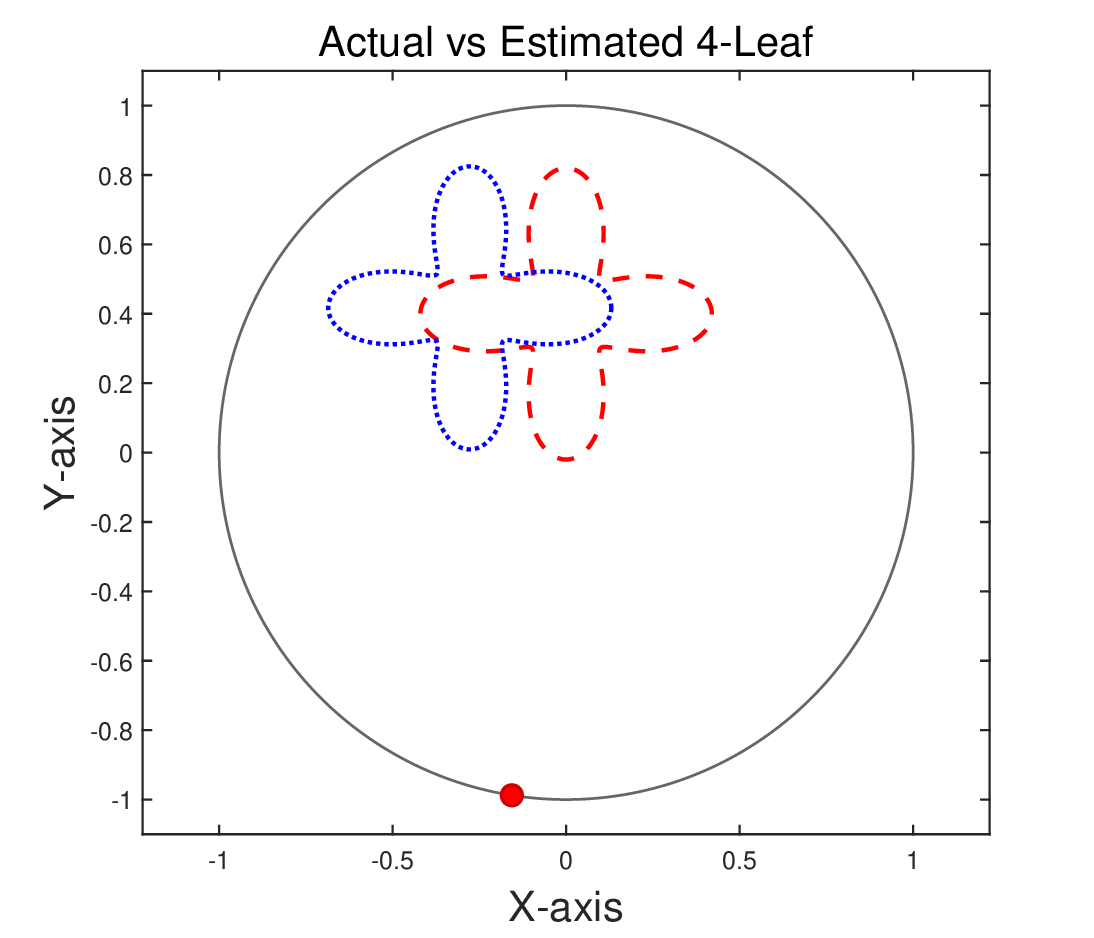}}
\subfigure[{$T\in[90/400,170/400]$, sensor at $19\pi/40$.}]{\includegraphics[width=0.38\textwidth]{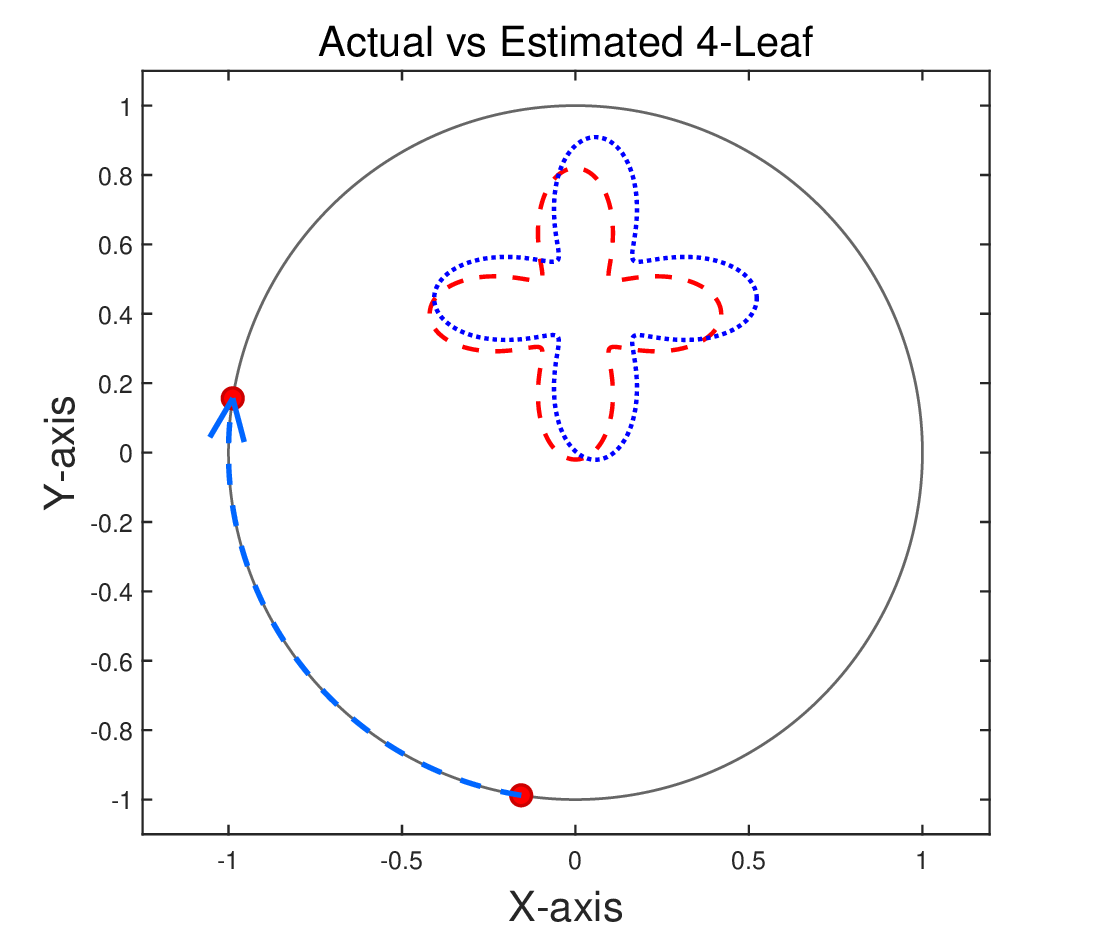}}
\subfigure[{$T\in[180/400,260/400]$, sensor at $9\pi/40$.}]{\includegraphics[width=0.38\textwidth]{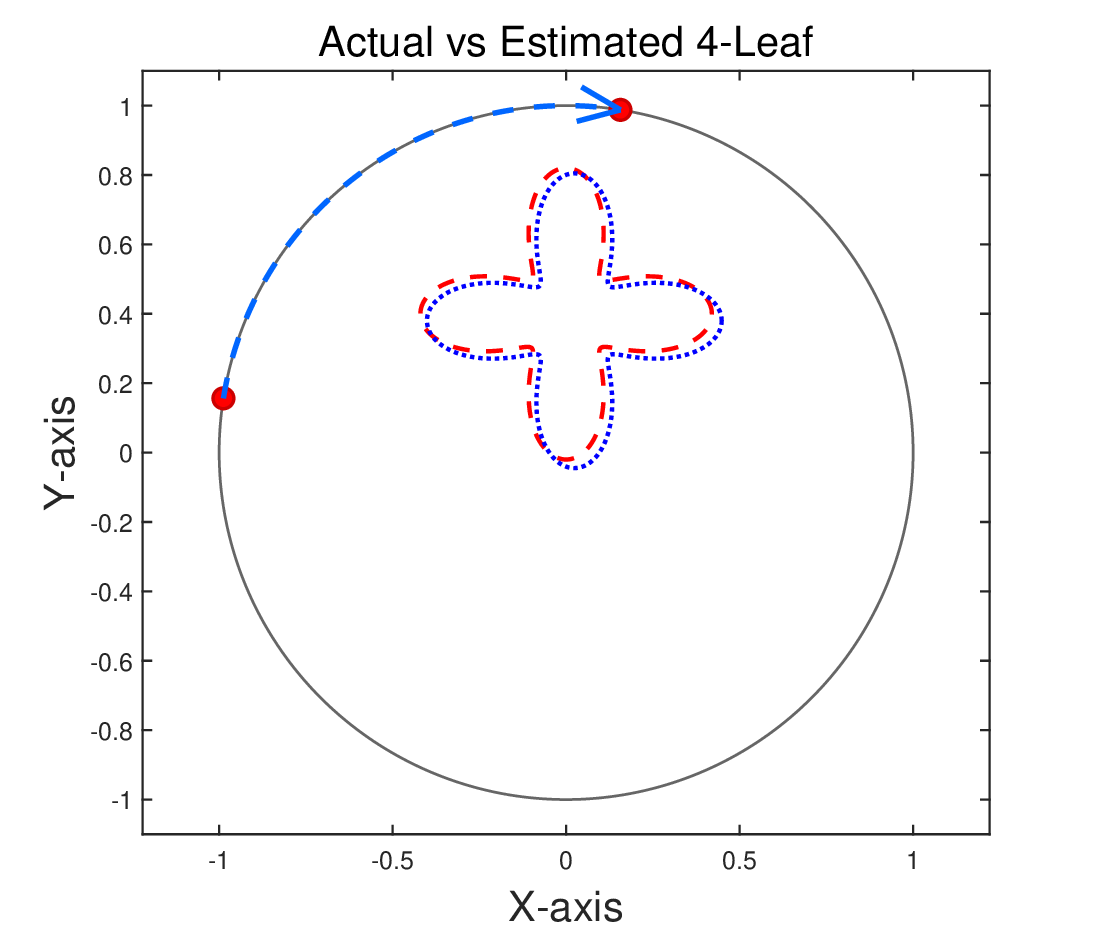}}
\subfigure[{$T\in[265/400,345/400]$, sensor at $14\pi/40$.}]{\includegraphics[width=0.38\textwidth]{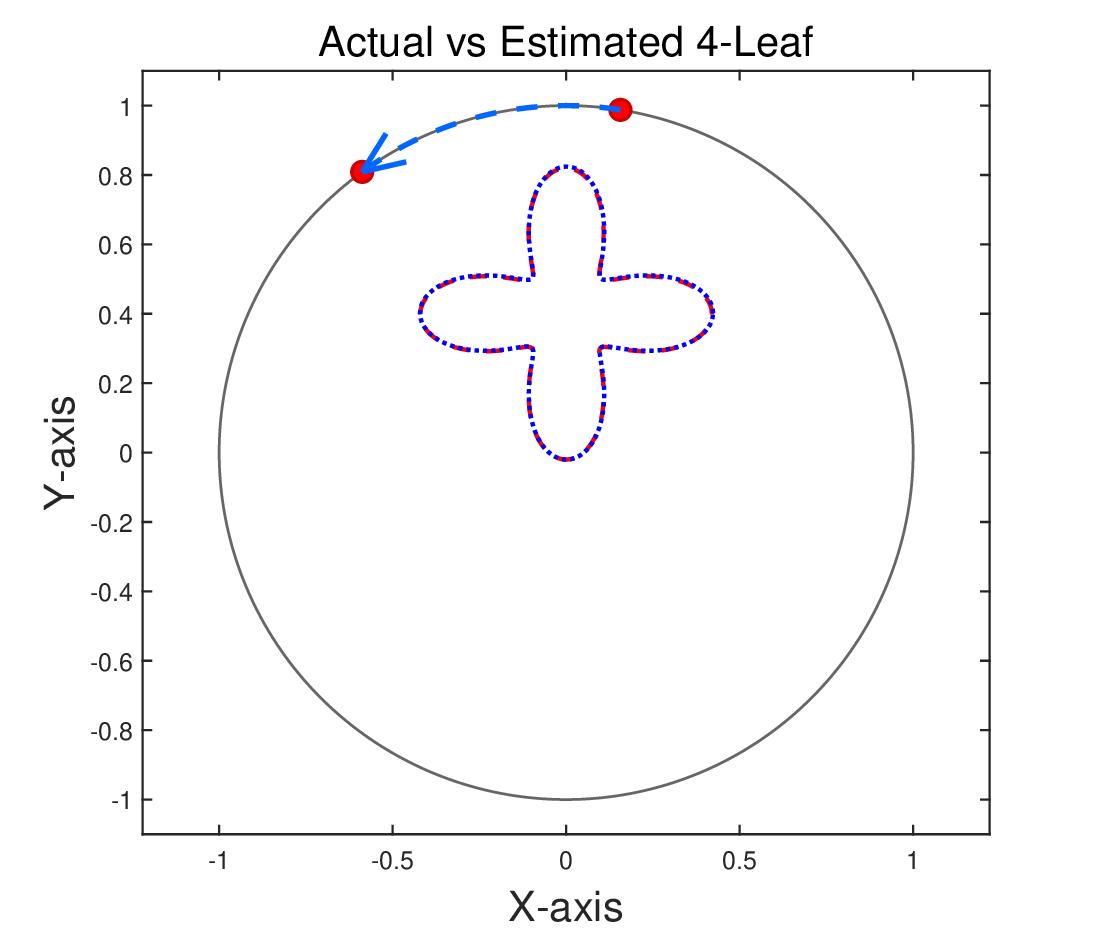}}
\caption{Evolution of the reconstruction for a four-leaf shaped source with $N=10000$.}
\label{leaf}
\end{figure}

To quantify the improvement brought by the moving sensor placement, Figure~\ref{fig:bayeleaf} reports the trace plots of the posterior samples at the initial and final stages. A consistent reduction in variance is observed in the moving-sensor stage, confirming that sequential data acquisition effectively decreases the uncertainty associated with the inferred parameters.

\begin{figure}[!htbp]
\centering
\subfigure[{Corresponding to Fig.~\ref{leaf}(a)}]{\includegraphics[width=0.38\textwidth]{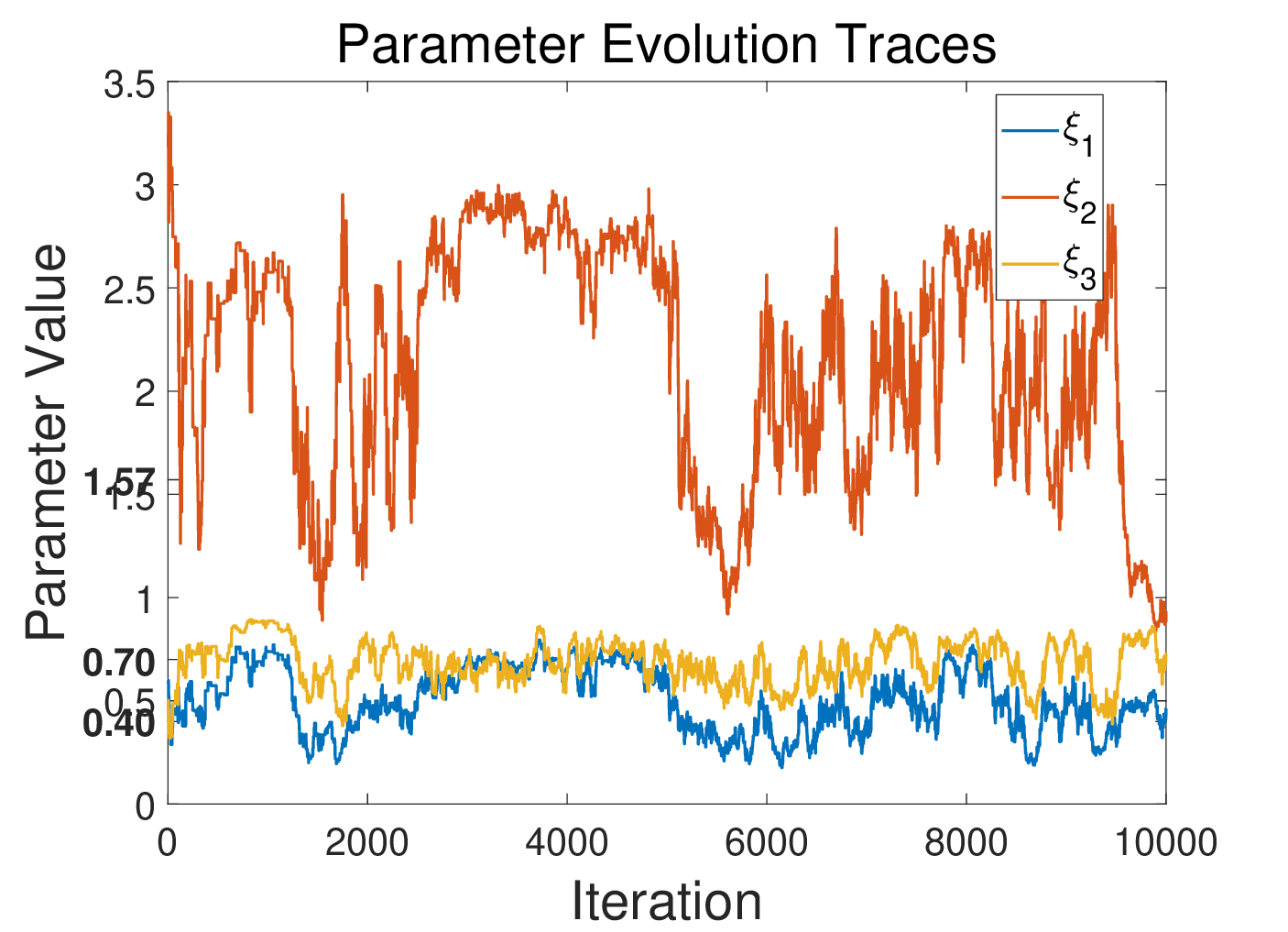}}
\subfigure[{Corresponding to Fig.~\ref{leaf}(d)}]{\includegraphics[width=0.38\textwidth]{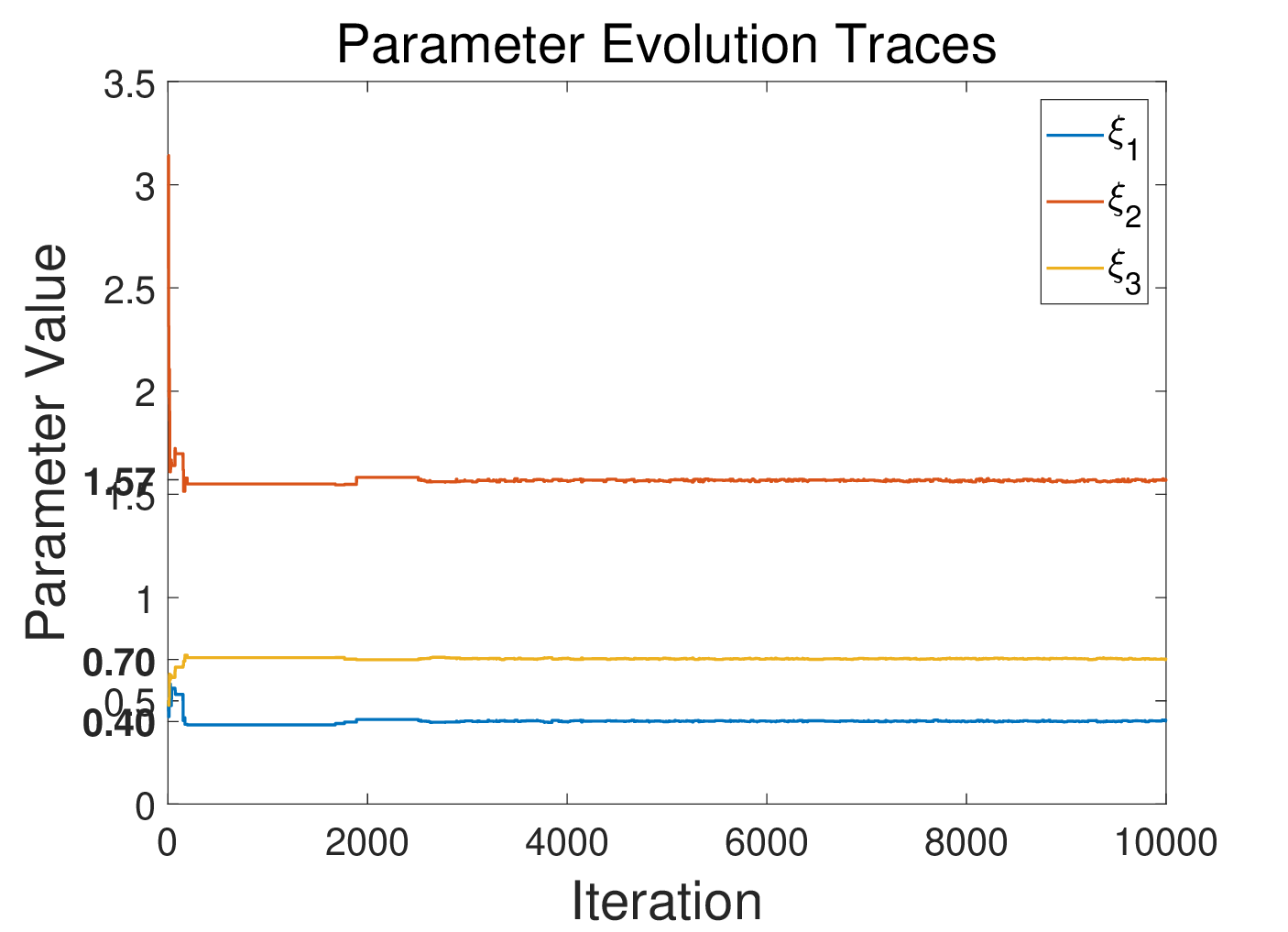}}
\caption{Trace plots of the posterior samples for the four-leaf shaped source with $N=10000$. The true parameters are $\boldsymbol{\xi}_{\text{true}}=(0.4,\pi/2,0.7)^\top$. The moving strategy (right figure) leads to a narrower confidence band, signifying decreased variance.}
\label{fig:bayeleaf}
\end{figure}

We also test the method with a reduced number of MCMC iterations, namely $N=500$. As illustrated in Figure~\ref{fig:bayeleafacn}, the moving sensor strategy continues to provide a more accurate approximation of the true source than the fixed-sensor strategy, further supporting the efficiency of the proposed framework.

\begin{figure}[!htbp]
\centering
\subfigure[{Corresponding to Fig.~\ref{leaf}(a)}]{\includegraphics[width=0.38\textwidth]{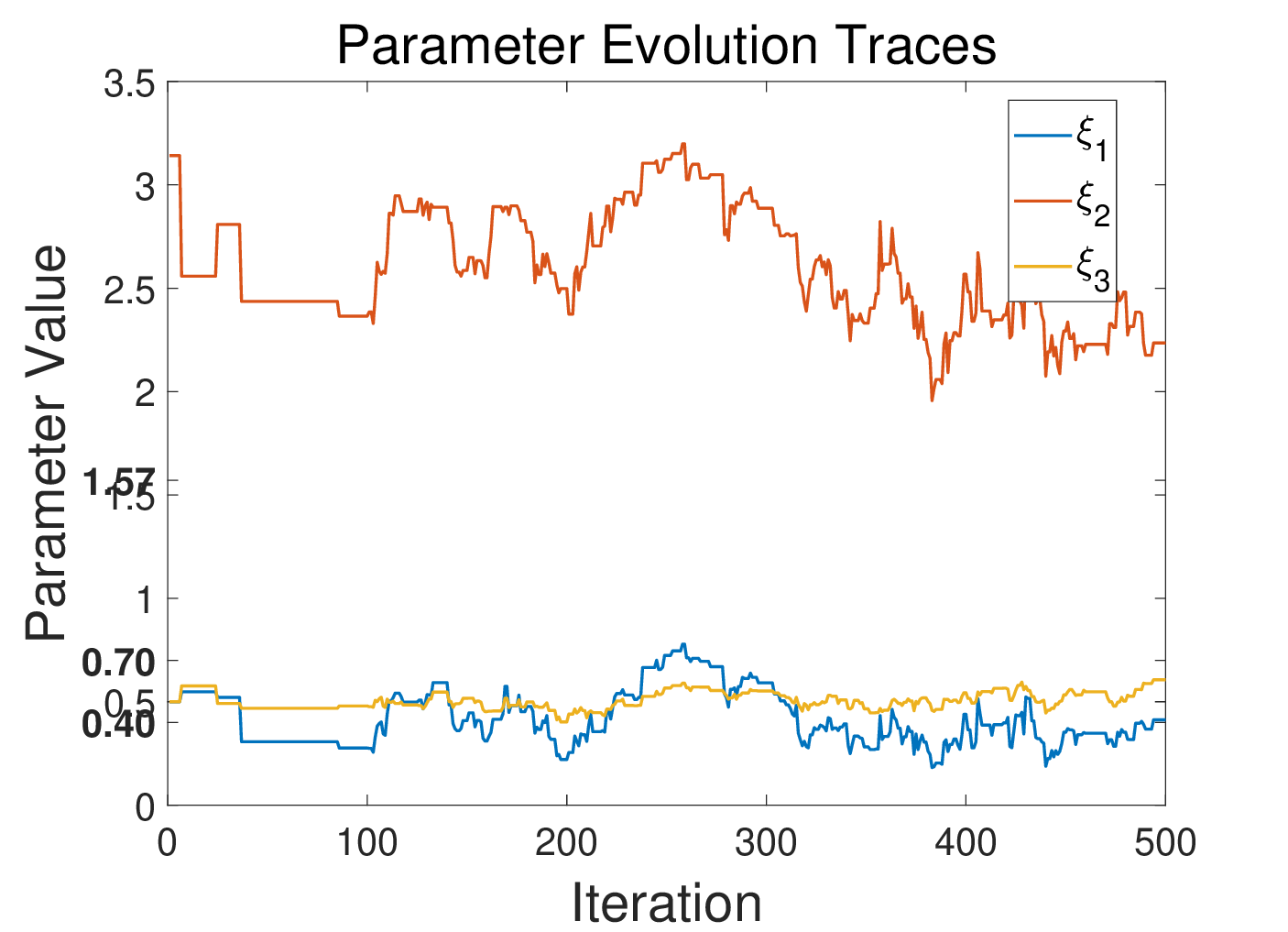}}
\subfigure[{Corresponding to Fig.~\ref{leaf}(d)}]{\includegraphics[width=0.38\textwidth]{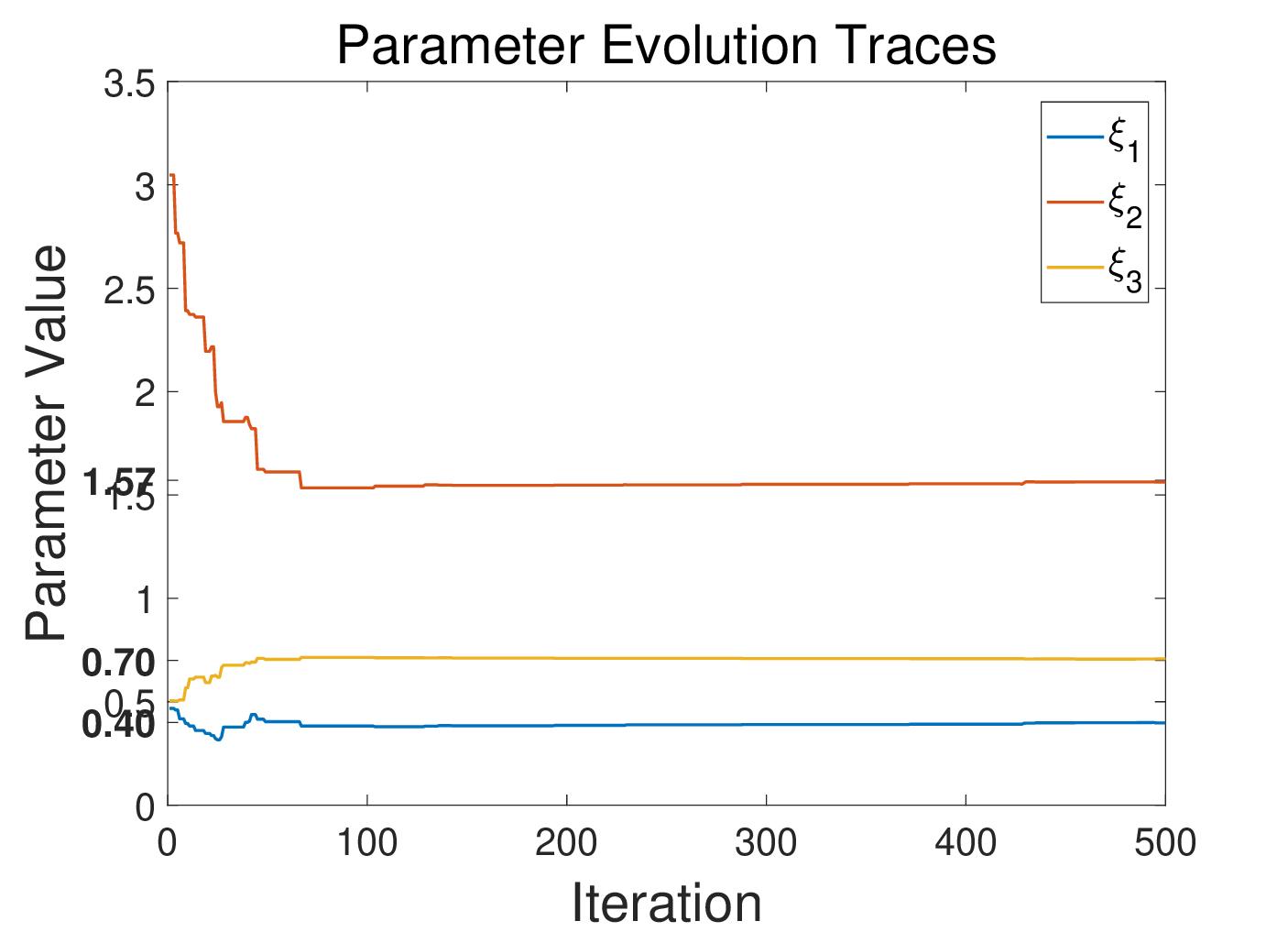}}
\caption{Trace plots of posterior samples for the four-leaf shaped source. The true parameters are $\boldsymbol{\xi}_{\text{true}}=(0.4,\pi/2,0.7)^\top$. With only $N=500$ iterations, the moving sensor placement strategy (right figure) achieves a more accurate approximation compared to the left figure.}
\label{fig:bayeleafacn}
\end{figure}

\FloatBarrier

\begin{example}[High-Dimensional Peanut-Shaped Source]
This example considers an asymmetric peanut-shaped source in a higher-dimensional parameter space. The boundary of $D$ is described in polar coordinates by
\begin{equation}\label{eq:param_boundary}
\partial D=\bigl\{\, q(\theta;\boldsymbol{\xi})\,(\cos\theta,\sin\theta)^{\!\mathsf T}:\theta\in[0,2\pi)\,\bigr\},
\end{equation}
where $q(\theta;\boldsymbol{\xi})\in(0,1)$ is a smooth $2\pi$-periodic function of $\theta$, and $\boldsymbol{\xi}\in\mathbb{R}^{2M+1}$ is the vector of unknown shape parameters. The function $q$ is expanded as the finite Fourier series
\begin{equation}\label{eq:q_series}
q(\theta;\boldsymbol{\xi})=\frac12\,\xi_1+\sum_{i=1}^{M}\Bigl(\xi_{2i}\cos(i\theta)+\xi_{2i+1}\sin(i\theta)\Bigr).
\end{equation}
\end{example}

To promote smooth reconstructions, we penalise the $H^2$ norm of $q(\theta;\boldsymbol{\xi})$, which is equivalent to imposing a zero-mean Gaussian prior $\mathcal{N}(\boldsymbol{0},B)$ on $\boldsymbol{\xi}$ \cite{RundellZhang:2018,LinOuZhangZhang:2024}. The prior covariance matrix $B\in\mathbb{R}^{(2M+1)\times(2M+1)}$ is diagonal with entries
\begin{equation}\label{eq:prior_cov}
B_{1,1}=1,\qquad
B_{2i,2i}=B_{2i+1,2i+1}=\frac{1}{i^2},\quad i=1,\dots,M.
\end{equation}
The decay factor $1/i^2$ suppresses high-frequency oscillations and therefore favours smooth boundaries.

In the simulation, we take $M=2$ and choose the true shape parameters as
\[
\boldsymbol{\xi}_{\text{true}}=[1,\;0,\;0,\;0,\;0.3]^{\mathsf T}.
\]
The source strength is set to $b=10$, and the measurement-noise variance is $\sigma^2=0.01^2$. The MCMC sampling uses $N_1=1.0\times10^3$ burn-in iterations and $N=1.5\times10^4$ total iterations, with the empirical covariance updated every $k_0=2.5\times10^3$ iterations. The step-size parameters $\beta_1$ and $\beta_2$ are tuned independently so that the acceptance rate remains between $25\%$ and $35\%$. For Algorithm~\ref{alg:adaptive_placement}, we set $m=15$, $c_1=1/20$, $c=30\pi$, and $N_t=80$.

Figure~\ref{Peanut} presents the reconstruction results over four time windows for the high-dimensional peanut-shaped source. Despite the increased parameter dimension and the more complex geometric structure of the target, the proposed method still exhibits stable convergence and produces satisfactory reconstructions from sparse moving observations.

\begin{figure}[!htbp]
\centering
\subfigure[{$T\in[1/400,80/400]$, sensor at $4\pi/40$.}]{\includegraphics[width=0.38\textwidth]{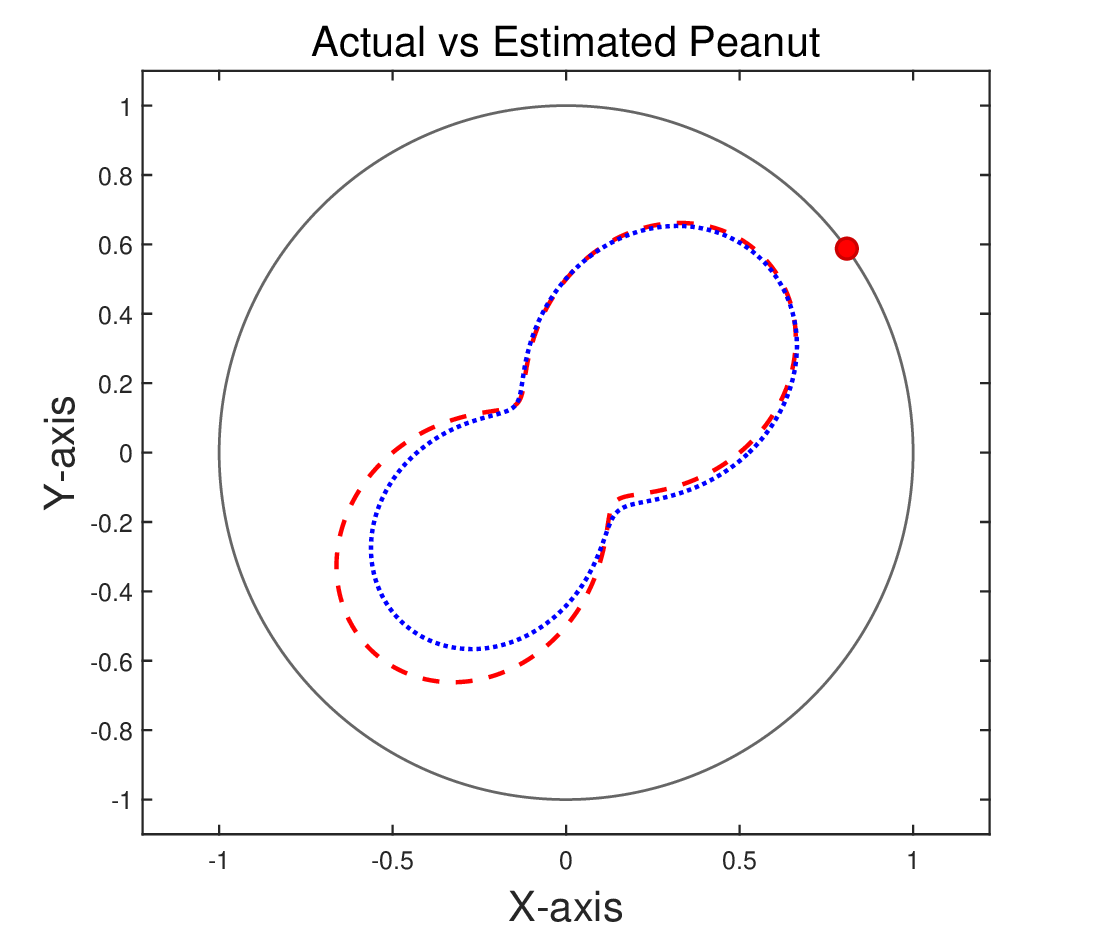}}
\subfigure[{$T\in[90/400,170/400]$, sensor at $19\pi/40$.}]{\includegraphics[width=0.38\textwidth]{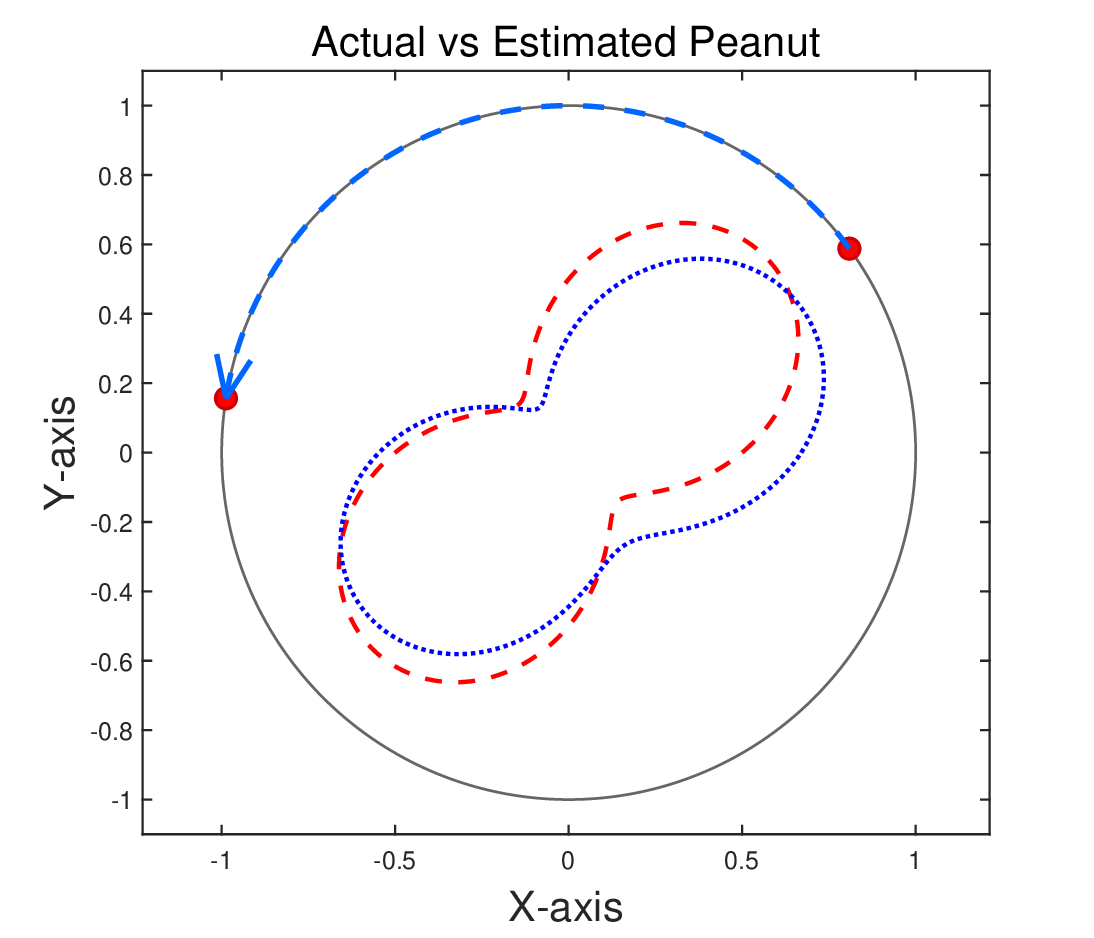}}
\subfigure[{$T\in[180/400,260/400]$, sensor at $34\pi/40$.}]{\includegraphics[width=0.38\textwidth]{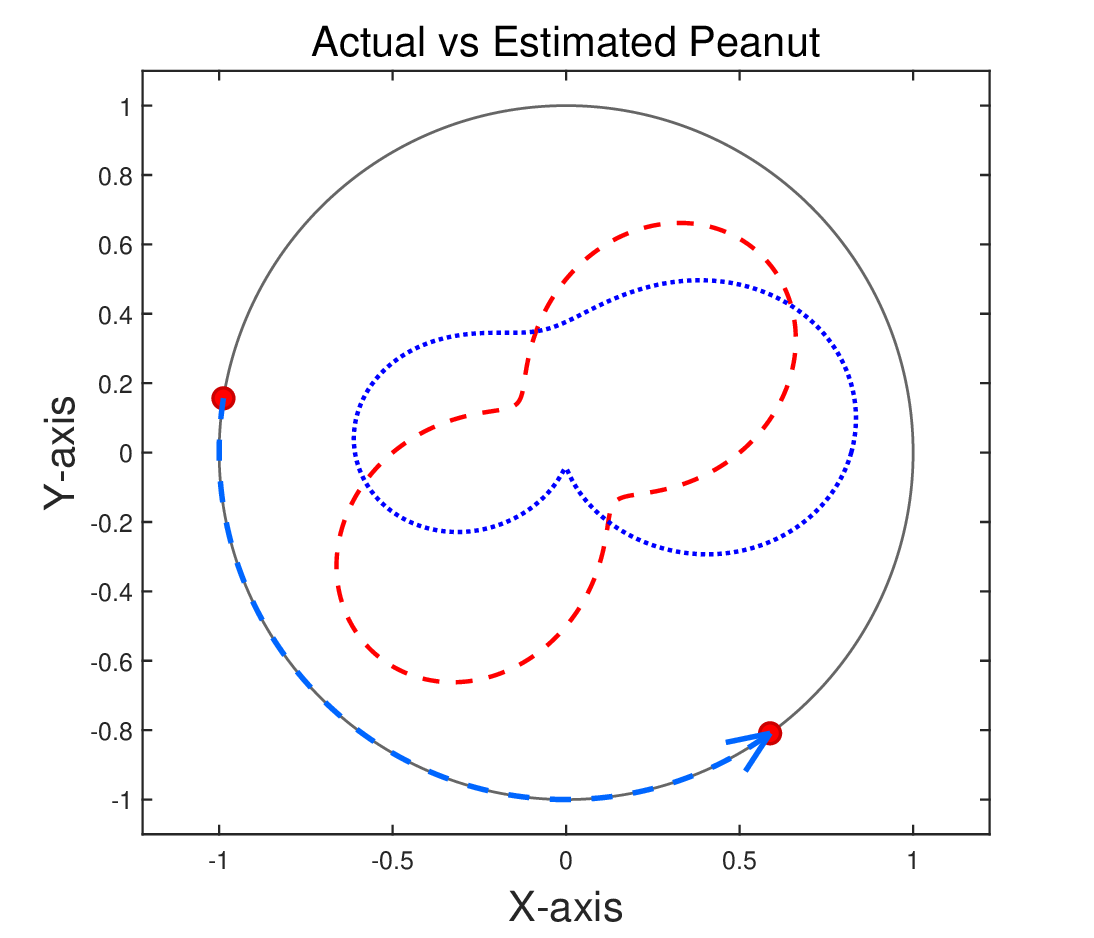}}
\subfigure[{$T\in[794/1200,1034/1200]$, sensor at $27\pi/40$.}]{\includegraphics[width=0.38\textwidth]{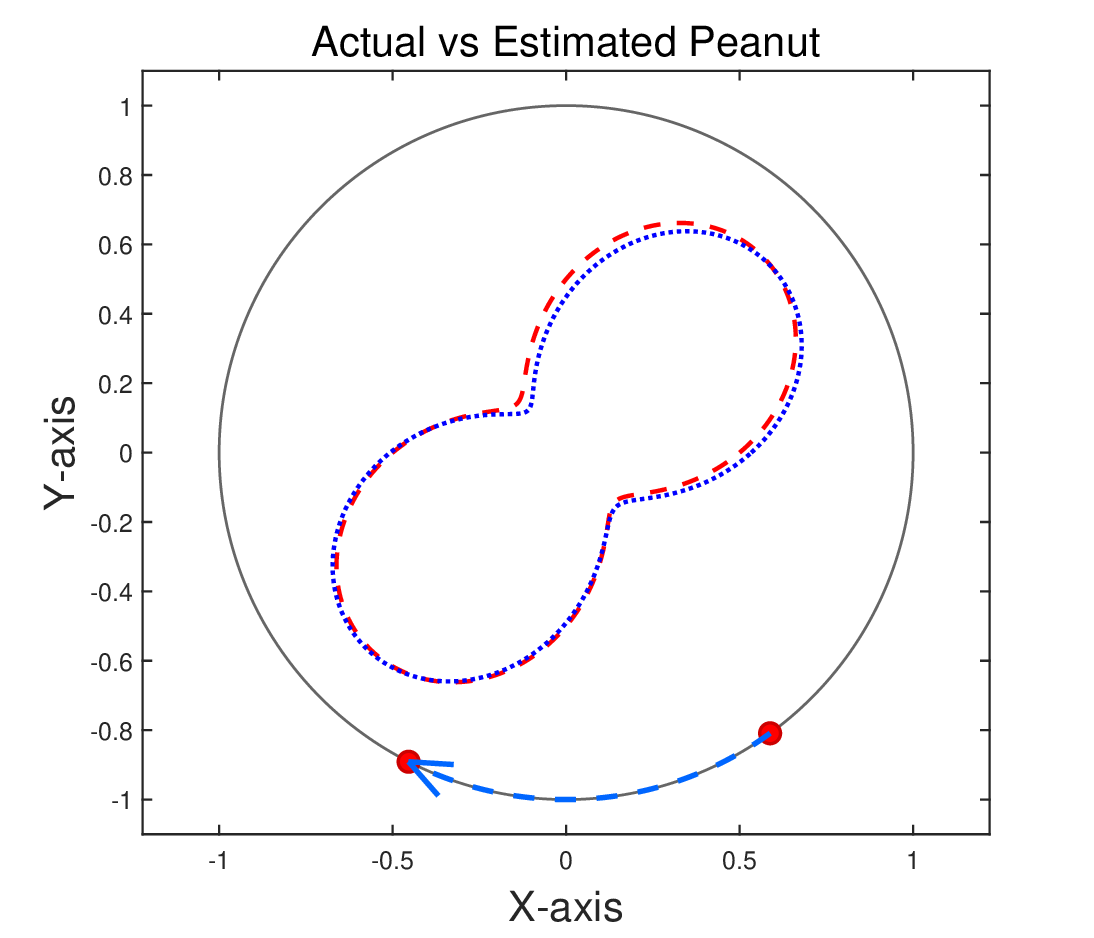}}
\caption{Evolution of the reconstruction for a peanut-shaped source with $N=15000$.}
\label{Peanut}
\end{figure}

The posterior trace plots for the five parameters are displayed in Figure~\ref{fig:bayepeanut}. In comparison with the fixed-sensor stage, the moving-sensor stage yields substantially tighter sample distributions. In particular, $\xi_1$ and $\xi_5$ are seen to concentrate around their true values, while $\xi_2$, $\xi_3$, and $\xi_4$ remain centred near zero. These results clearly illustrate the uncertainty reduction achieved by the adaptive sensor movement.

\begin{figure}[!htbp]
\centering
\subfigure[{Corresponding to Fig.~\ref{Peanut}(a)}]{\includegraphics[width=0.38\textwidth]{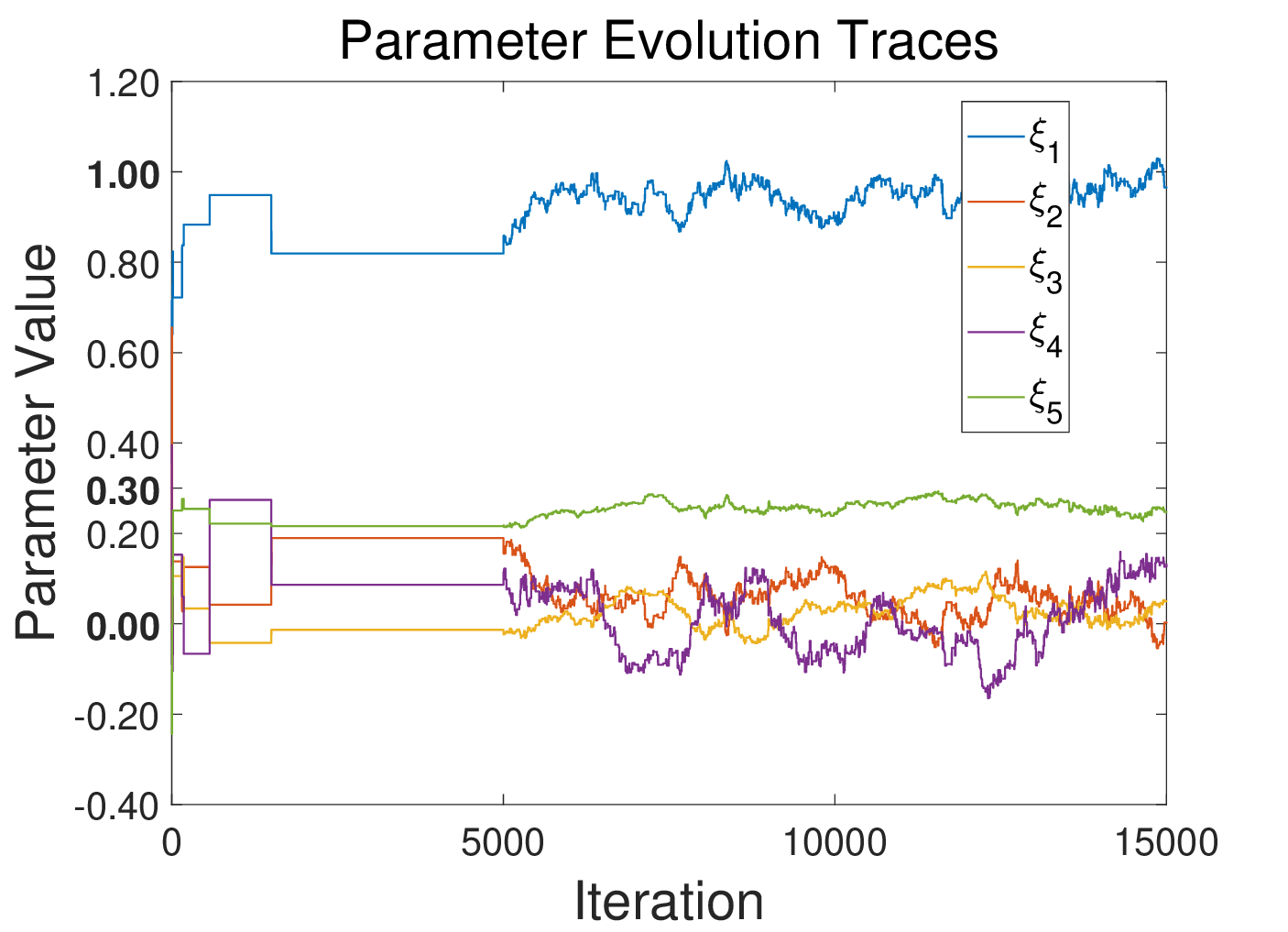}}
\subfigure[{Corresponding to Fig.~\ref{Peanut}(d)}]{\includegraphics[width=0.38\textwidth]{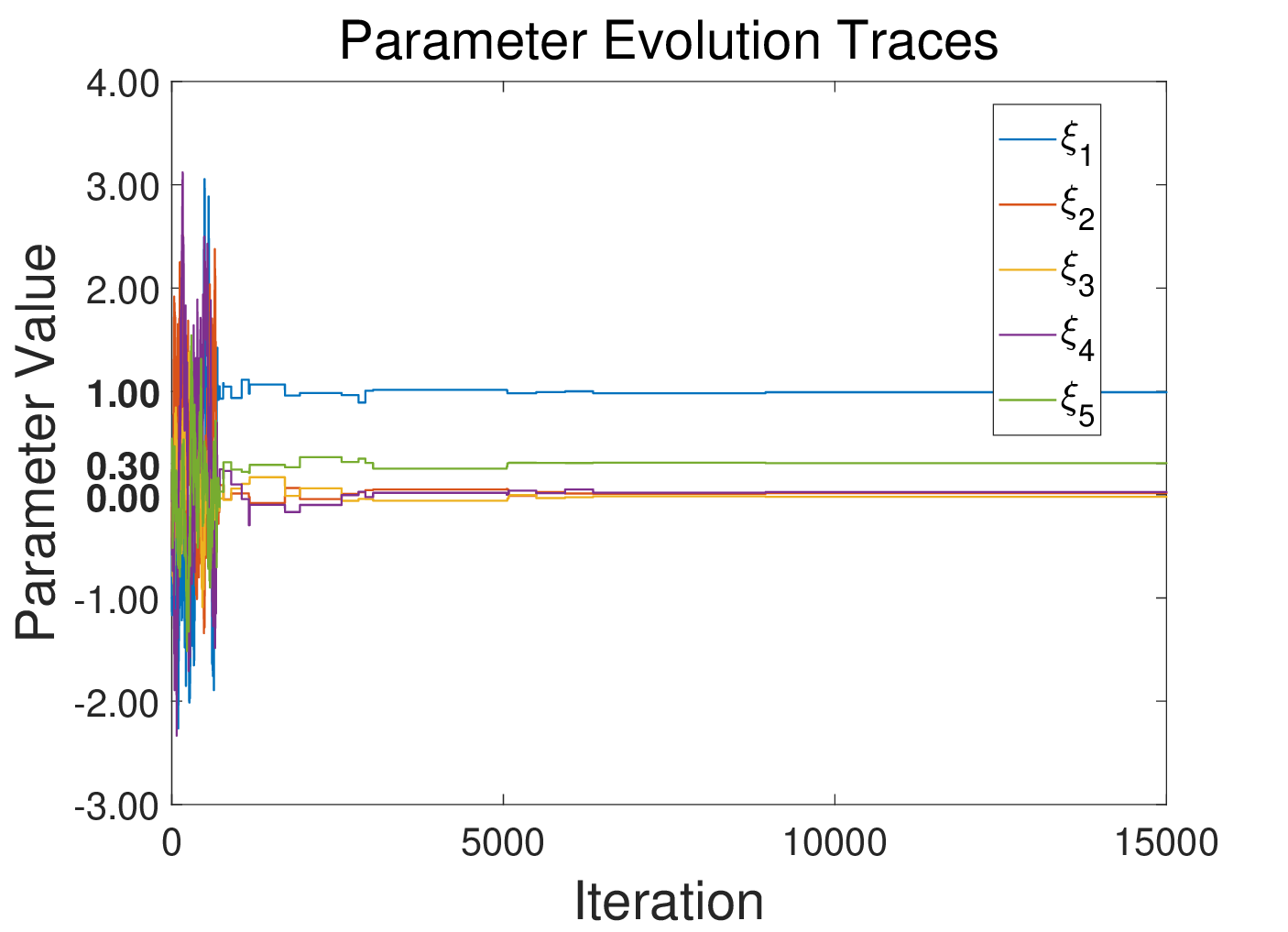}}
\caption{Trace plots of the posterior samples for the peanut-shaped source with $N=15000$. The true parameters are $\boldsymbol{\xi}_{\text{true}}=(1,0,0,0,0.3)^\top$. The moving sensor placement strategy (right figure) leads to samples where $\xi_1$ and $\xi_5$ concentrate near $1$ and $0.3$, and $\xi_2$, $\xi_3$, $\xi_4$ near $0$, with markedly narrower confidence bands indicating reduced variance.}
\label{fig:bayepeanut}
\end{figure}

Finally, to further demonstrate the efficiency of the proposed strategy in the higher-dimensional setting, we also report results obtained with a reduced total of $N=3500$ iterations. Figure~\ref{fig:bayepeanutacn} shows that, even under this reduced sampling budget, the moving sensor strategy continues to provide a significantly better approximation to the true source than the fixed-sensor strategy.

\begin{figure}[!htbp]
\centering
\subfigure[{Corresponding to Fig.~\ref{Peanut}(a)}]{\includegraphics[width=0.38\textwidth]{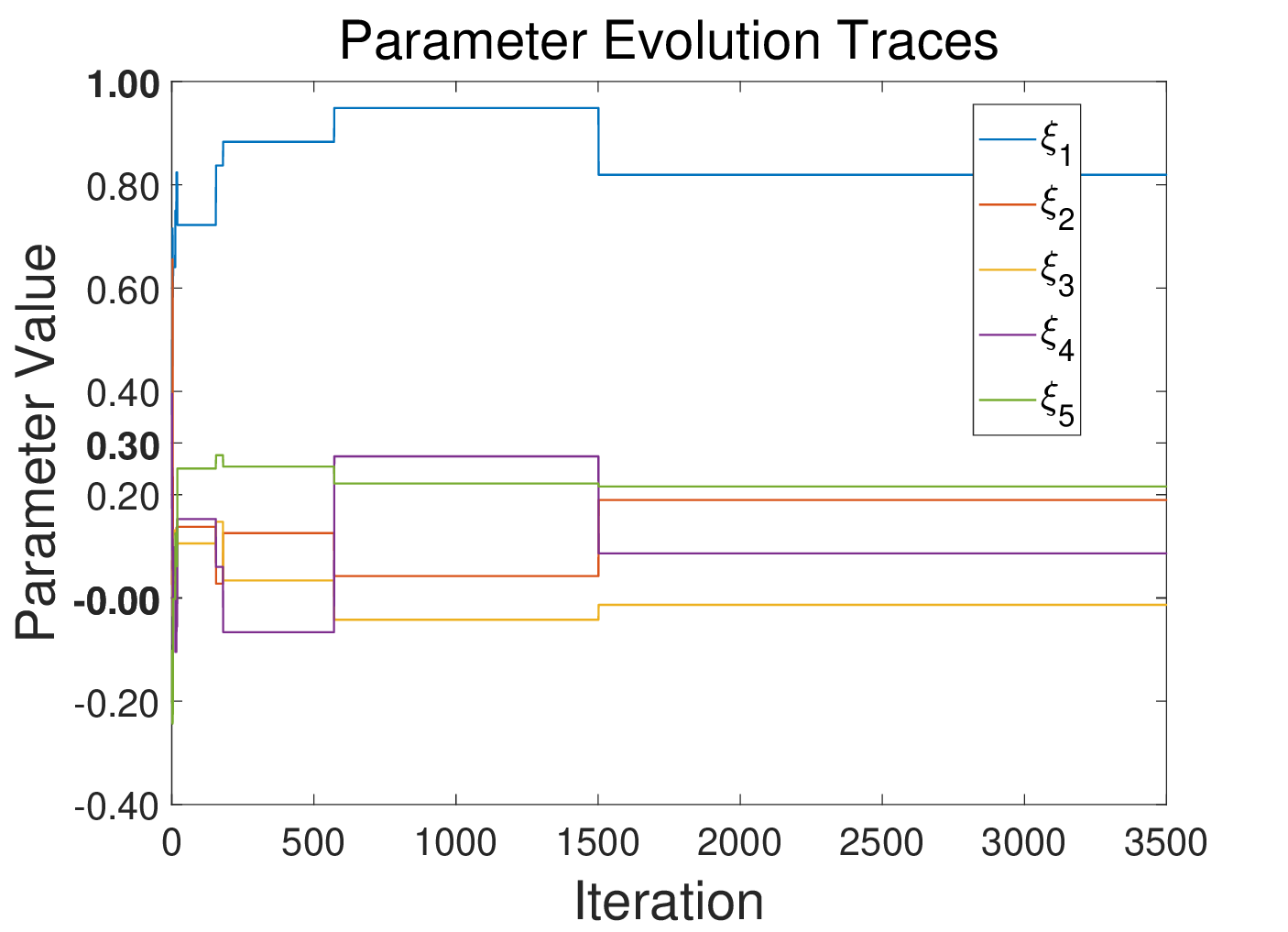}}
\subfigure[{Corresponding to Fig.~\ref{Peanut}(d)}]{\includegraphics[width=0.38\textwidth]{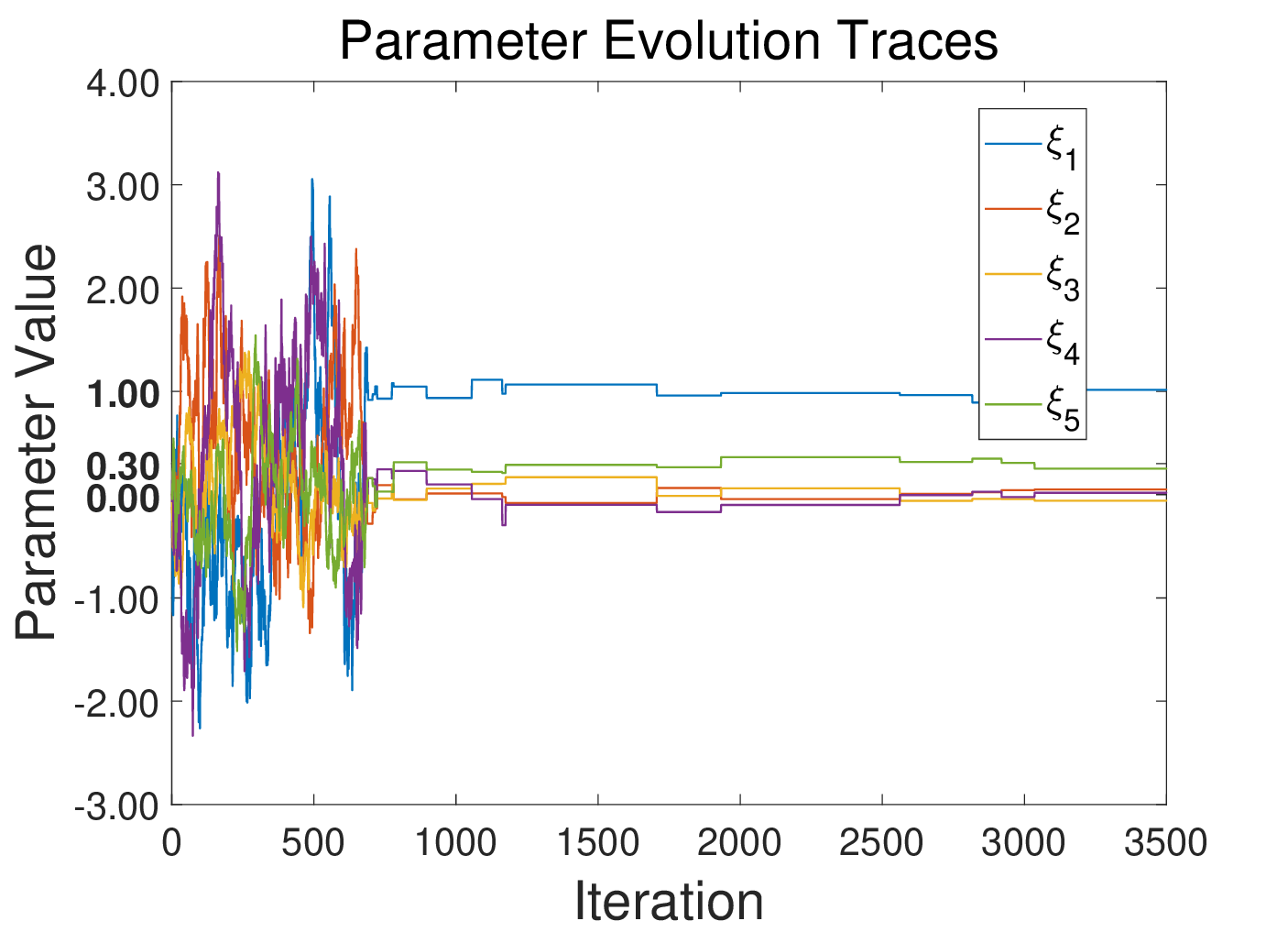}}
\caption{Trace plots of posterior samples for the peanut-shaped source. The true parameters are $\boldsymbol{\xi}_{\text{true}}=(1,0,0,0,0.3)^\top$. With only $N=3500$ iterations, the moving sensor placement strategy (right figure) achieves a significantly better approximation to the true solution compared to the left figure.}
\label{fig:bayepeanutacn}
\end{figure}

\FloatBarrier

\section*{Acknowledgments.} 
Zhidong Zhang is supported by the National Key Research and Development Plan of China(Grant No.2023YFB3002400). Wenlong Zhang is partially supported by the National Natural Science Foundation of China under grant numbers No.12371423, No.12241104 and No.12561160122.

\bibliographystyle{abbrv}
\bibliography{ref}

\end{document}